\documentclass{article}
\usepackage[utf8]{inputenc}

\usepackage{cite}

\usepackage{amssymb, amsmath, amsthm}
\usepackage[margin=1in]{geometry}
\usepackage{tikz}
\usepackage{stmaryrd} 
\usepackage{thm-restate}
\usepackage{xcolor}
\usepackage{graphicx}
\usepackage{etoolbox}

\usepackage{hyperref}
    \hypersetup{colorlinks,
      linkcolor=blue,
      citecolor=blue,
      urlcolor=blue}

\def\ucr#1{{\text{\rm cr}(#1)}}
\def\rcr#1{\overline{{\text{\rm cr}}}(#1)}
\def\Hom{{\text{\rm Hom}^+(\mathcal{A},\mathbb{R})}}

\tikzset{flag_pic/.style={scale=1}}  
\tikzset{unlabeled_vertex/.style={inner sep=1.7pt, outer sep=0pt, circle, fill}} 
\tikzset{labeled_vertex/.style={inner sep=2.2pt, outer sep=0pt, rectangle, fill=yellow, draw=black}} 
\tikzset{edge_color0/.style={color=black,line width=1.2pt,opacity=0.5}} 
\tikzset{edge_color1/.style={color=red,  line width=1.2pt,opacity=0}} 
\tikzset{edge_color2/.style={color=blue, line width=1.2pt,opacity=1}} 
\tikzset{edge_color3/.style={color=green!80!black,line width=1.2pt}} 
\tikzset{edge_color4/.style={color=red,  line width=1.2pt,dotted}} 
\tikzset{edge_color5/.style={color=blue, line width=1.2pt,dotted}} 
\tikzset{edge_color6/.style={color=green, line width=1.2pt,dotted}} 
\tikzset{edge_color7/.style={color=orange, line width=1.2pt}} 
\tikzset{edge_color8/.style={color=gray, line width=1.2pt}} 
\tikzset{edge_colorroot/.style={color=red, line width=1.7pt}} 
\tikzset{edge_thin/.style={color=black}} 
\tikzset{edge_hidden/.style={color=black,dotted,opacity=0}} 
\tikzset{vertex_color1/.style={inner sep=1.7pt, outer sep=0pt, draw, circle, fill=red}} 
\tikzset{vertex_color2/.style={inner sep=1.7pt, outer sep=0pt, draw, circle, fill=blue}} 
\tikzset{vertex_color3/.style={inner sep=1.7pt, outer sep=0pt, draw, circle, fill=green}} 
\tikzset{vertex_color4/.style={inner sep=1.7pt, outer sep=0pt, draw, circle, fill=pink}} 
\tikzset{labeled_vertex_color1/.style={inner sep=2.2pt, outer sep=0pt, draw, rectangle, fill=red}} 
\tikzset{labeled_vertex_color2/.style={inner sep=2.2pt, outer sep=0pt, draw, rectangle, fill=blue}} 
\tikzset{labeled_vertex_color3/.style={inner sep=2.2pt, outer sep=0pt, draw, rectangle, fill=green}} 
\tikzset{labeled_vertex_color4/.style={inner sep=2.2pt, outer sep=0pt, draw, rectangle, fill=pink}} 

\tikzset{vertexA/.style={inner sep=1.7pt, outer sep=0pt, circle, draw=black, fill=red}} 
\tikzset{vertexB/.style={inner sep=1.7pt, outer sep=0pt, circle, draw=black, fill=blue}}

\newtheorem{theorem}{Theorem}

\newtheorem{lemma}[theorem]{Lemma}

\newtheorem{conjecture}[theorem]{Conjecture}

\begin{document}
\title{Crossing numbers of complete bipartite graphs}

\author{J\'ozsef Balogh\thanks{Department of Mathematical Sciences, University of Illinois, Urbana, IL 61801, USA
E-mail: {\tt jobal@illinois.edu}.
Balogh was supported in part by NSF grants DMS-1764123, RTG DMS-1937241 and FRG DMS-2152488, the Arnold O. Beckman Research Award (UIUC Campus Research Board RB 22000), and the Langan Scholar Fund (UIUC).}
\and
Nicholas Layman\thanks{Department of Mathematics, Iowa State University, Ames, IA 50011, USA 
E-mail: {\tt nlayman@iastate.edu}. 
}
\and
Bernard Lidick\'{y}\thanks{Department of Mathematics, Iowa State University, Ames, IA. E-mail: {\tt lidicky@iastate.edu}. Research of this author is supported in part by NSF FRG DMS-2152490 and DMS-2554129, the Simons Foundation TSM-00013439 and Scott Hanna Professorship.
}
\and
Florian Pfender\thanks{Department of Mathematical and Statistical Sciences, University of Colorado Denver, CO 80204 USA
E-mail: {\tt Florian.Pfender@ucdenver.edu}.
Research is partially supported by NSF DMS-2152498 and  DMS-2554130.
}
\and
Gelasio~Salazar\thanks{Instituto de Física, Universidad Autónoma de San Luis Potosí, SLP 78000, Mexico
E-mail: {\tt gsalazar@ifisica.uaslp.mx}
Supported by CONACYT under Proyecto Ciencia de Frontera 191952.
}
\and
Sam Spiro\thanks{Georgia State University, Atlanta, GA, USA, 
E-mail: {\tt sspiro@gsu.edu}.
Research supported by the NSF Mathematical Sciences Postdoctoral Research Fellowships Program under Grant DMS-2202730.}
\and
Jan Volec\thanks{Department of Theoretical Computer Science, Faculty of Information Technology, Czech Technical University in Prague, Th\'akurova 9, Prague, 160 00, Czech Republic. E-mail: {\tt jan@ucw.cz}
Research of this author was supported by the grant
23-06815M of the Grant Agency of the Czech Republic.
}
}

\maketitle

\def\crKnnCertified{0.96936}  
\def\crKnnLemmaCertified{0.24234}

\begin{abstract}
Zarankiewicz's long standing  conjecture states that the crossing number  $\ucr{K_{m,n}}$ of the complete bipartite graph is
\[
Z(m,n):=\bigg\lfloor{\frac{m}{2}}\bigg\rfloor \bigg\lfloor{\frac{m-1}{2}}\bigg\rfloor \bigg\lfloor{\frac{n}{2}}\bigg\rfloor \bigg\lfloor{\frac{n-1}{2}}\bigg\rfloor.
\]
Using flag algebras, we show that $\ucr{K_{n,n}} \geq \crKnnCertified \cdot Z(n,n) + o(n^4)$.

We also show that the rectilinear crossing number
$\overline{cr}(K_{n,n})$ of $K_{n,n}$ is at least $0.987\cdot Z(n,n) + o(n^4)$. 
Additionally, we prove the following for complete bipartite graphs:  if a drawing of $K_{n,n}$ contains no induced subdrawing of $K_{3,4}$ with exactly two crossings whose endpoints intersect in exactly one vertex, then it has at least $Z(n,n)+o(n^4)$ crossings. 
This is a local restriction inspired by Tur\'an-type problems that gives an asymptotically tight result.

A closely related conjecture is Hill's conjecture stating that the crossing number $\ucr{K_{n}}$ of the complete graph $K_n$ is
\[
H(n):=
\frac14 
\left\lfloor \frac{n}{2} \right\rfloor
\left\lfloor \frac{n-1}{2} \right\rfloor
\left\lfloor \frac{n-2}{2} \right\rfloor
\left\lfloor \frac{n-3}{2} \right\rfloor
\]
for all $n \geq 3$.
By an averaging argument, the asymptotic Zarankiewicz conjecture implies the asymptotic Hill conjecture. 
We improve  the lower bound on  $\ucr{K_{n}}$  from $(0.985+o(1)) H(n)$ to $(0.9899+o(1)) H(n)$.
Both conjectures were claimed to be true in manuscripts by OpenAI dated Sep 23, 2026 and made public on Oct 6, 2026.
This paper represents the state of knowledge before those publications and shows how far we can push the flag algebra method.
\end{abstract}

%

\section{Introduction}

We recall that the {\em crossing number} $\ucr{G}$ of a graph $G$ is the minimum number of pairwise crossings of edges in a drawing of $G$ in the plane.

The systematic study of crossing numbers of graphs seems to date back to a question by Paul Tur\'an in 1944. While working in a brick factory outside Budapest, where his work consisted of pushing wagon loads of bricks from kilns to storage sites, Tur\'an noted that rail wagons tended to derail at crossings of tracks, and he wondered what was the best way to design a brickyard, in the sense that the number of track crossings is minimized. In modern graph theory terminology, this question is equivalent to: what is the crossing number cr$(K_{m,n})$ of the complete bipartite graph $K_{m,n}$? As pointed out by Beineke and Wilson in their lively survey~\cite{Beineke2010}, this problem has a curious history. Zarankiewicz thought he had solved the problem~\cite{Zarankiewicz54}. Around a decade later, his main argument was found to have an unpluggable gap~\cite{Decline}. In any case, Zarankiewicz did exhibit a natural way to draw $K_{m,n}$ with exactly 
\[
Z(m,n):=\bigg\lfloor{\frac{m}{2}}\bigg\rfloor \bigg\lfloor{\frac{m-1}{2}}\bigg\rfloor \bigg\lfloor{\frac{n}{2}}\bigg\rfloor \bigg\lfloor{\frac{n-1}{2}}\bigg\rfloor
\]
crossings, see Figure~\ref{fig:K66} for an example. To this day, no drawing of $K_{m,n}$ with fewer than $Z(m,n)$ crossings has been found, and cr$(K_{m,n})=Z(m,n)$ has become known as {\em Zarankiewicz's Conjecture}. Before the recent OpenAI announcement on Oct 6, 2026, claiming a full proof, this conjecture had been settled only when $\min\{m,n\}\le 6$~\cite{Kleitman70}, and in a few additional small cases~\cite{Woodall93}. 

\begin{figure}
\begin{center}    
\tikzset{
insep/.style={inner sep=2pt, outer sep=0pt, circle, fill}, 
redline/.style={red,line width=2pt},
blueline/.style={blue,line width=2pt},
grayline/.style={gray,line width=2pt},
blackline/.style={gray,line width=2pt},
turanblock/.style={fill=blue!30},
funky/.style={black,line width=2pt,dotted},
full/.style={black,line width=2pt,latex},
turan/.style={inner sep=0.43cm, outer sep=0pt, circle, fill=white!30,draw}, 
trash/.style={inner sep=0.2cm, outer sep=0pt, circle, fill=white!30,draw}, 
insepred/.style={inner sep=4pt, outer sep=0pt, circle, fill=red,draw}, 
insepgreen/.style={inner sep=4pt, outer sep=0pt, circle, fill=white,draw}, 
insepblue/.style={inner sep=4pt, outer sep=0pt, circle, fill=blue,draw}, 
}

\begin{tikzpicture} 
\draw
\foreach \a in {0,1}{
\foreach \z/\n in {0.7/1, 1.3/2, 1.9/3}
{(0,0)  +(\a*90:\z) node[vertexA](x\a\n){}}}

\foreach \a in {0,1}{
\foreach \z/\n in {0.7/4,1.3/5, 1.9/6}
{(0,0)  +(\a*90+180:\z) node[vertexB](x\a\n){}}}

\foreach \x in {x01,x02,x03,x04,x05,x06}{
\foreach \y in {x11,x12,x13,x14,x15,x16}{
(\x) -- (\y)
}
}

\foreach \x in {x01,x02,x03,x04,x05,x06} { (\x) node[vertexA]{} }
\foreach \x in {x11,x12,x13,x14,x15,x16} { (\x) node[vertexB]{} }
;

\end{tikzpicture}
\end{center}
    \caption{Zarankiewicz's drawing of $K_{6,6}$.}
    \label{fig:K66}
\end{figure}

There seemed to be a consensus among researchers interested in the conjecture that the conjecture is true, but that attempting to settle it exactly is a hopeless task. 
One of the reasons may be that there are many constructions that have asymptotically $Z(m,n)$ crossings. For example, every geodesic drawing of $K_{2m,2n}$ on the sphere such that for each vertex there is another vertex in the same class drawn in its antipodal location has exactly $Z(2m,2n)$ crossings. For this reason, most recent efforts on this problem were focused on its asymptotic version. The asymptotic version of Zarankiewicz's Conjecture reads $\ucr{K_{m,n}}=m^2n^2/16\, + o(m^2 n^2)$. This weaker version also remained open. 
We focus on the diagonal case which reads as follows. 
\begin{conjecture}\label{conj:Z}
$ \ucr{K_{n,n}}=n^4/16\, + o(n^4)$.
\end{conjecture}
To compare the asymptotic results, we let
\[
L := \lim_{n \to \infty} \frac{\ucr{K_{n,n}}}{Z(n,n)},
\]
\noindent so that Conjecture~\ref{conj:Z} may be equivalently paraphrased by saying that $L=1$.

Kleitman~\cite{Kleitman70} showed that $\ucr{K_{5,n}} = Z(5, n)$, and an easy counting argument shows that  this implies that  $L \geq 0.8$. De~Klerk, Maharry, Pasechnik, Richter, and Salazar~\cite{deKlerk06} gave a lower bound on $\ucr{K_{7,n}}$ using semidefinite programming (SDP) implying $L > 0.83$. 
A further improvement to $L > 0.859$ follows from 
de Klerk, Pasechnik, and Schrijver~\cite{deKlerk07} who also used SDP to give a lower bound on $\ucr{K_{9,n}}$.
For a fixed $m \geq 3$, deciding whether or not Zarankiewicz’s conjecture holds for $K_{m,n}$ is a finite problem for every
$n \geq m$, see~\cite{deKlerk06}.
Brosch and Polak~\cite{Brosch2022} used SDP to provide lower bounds on $\ucr{K_{10,n}}$, $\ucr{K_{11,n}}$, $\ucr{K_{12,n}}$, and $\ucr{K_{13,n}}$ giving $L \geq 0.8878$.  
Very recently, Brosch and Pikhurko~\cite{Brosch2026} managed to refine the argument further using AI and obtained the following result.
\begin{theorem}[Brosch and Pikhurko~\cite{Brosch2026}]
    $L \geq 0.935$.  In other words $\ucr{K_{n,n}} > 0.935 n^4/16+o(n^4)$.
\end{theorem}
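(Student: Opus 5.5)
The plan follows the standard reduction from $K_{m,n}$ with small fixed $m$ to the asymptotic diagonal bound, which is how the earlier results were obtained: Kleitman's $\ucr{K_{5,n}}=Z(5,n)$ gave $L\ge 0.8$, and the SDP bounds for $K_{7,n}$, $K_{9,n}$ and $K_{10,n},\dots,K_{13,n}$ gave the subsequent improvements. First I fix a small $m$ and establish a lower bound of the form $\ucr{K_{m,n}} \ge c_m\, Z(m,n) - O(n)$ for all large $n$. Then I transfer it to $K_{n,n}$ by double counting.

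The counting step goes as follows. Take an optimal drawing of $K_{n,n}$ with colour classes $A$ and $B$. Every crossing is between two edges with four distinct endpoints, two in $A$ and two in $B$. Fix an $m$-subset $S\subseteq A$. The subdrawing on $S\cup B$ is a drawing of $K_{m,n}$, so it has at least $\ucr{K_{m,n}}$ crossings. Each crossing of the whole drawing appears in exactly $\binom{n-2}{m-2}$ such subdrawings. Summing over all $S$ gives
\[
\ucr{K_{n,n}} \ge \frac{\binom{n}{m}}{\binom{n-2}{m-2}}\,\ucr{K_{m,n}} = \frac{n(n-1)}{m(m-1)}\,\ucr{K_{m,n}}.
\]
Here $Z(m,n)\approx \lfloor m/2\rfloor\lfloor (m-1)/2\rfloor n^2/4$. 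Dividing by $Z(n,n)\approx n^4/16$ yields $L\ge c_m \cdot \frac{4\lfloor m/2\rfloor\lfloor (m-1)/2\rfloor}{m(m-1)}$. For odd $m$ this factor equals $\frac{m-1}{m}$; for example, Kleitman's $m=5$ gives $4/5$. Reaching $0.935$ therefore requires a larger $m$, say $m\approx 15$–$20$, together with a constant $c_m$ close to $1$.

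The main work is bounding $\ucr{K_{m,n}}$ for such $m$, and this is where I follow de Klerk--Pasechnik--Schrijver. Encode each drawing by its rotation system on the $B$-vertices: each vertex of $B$ induces a cyclic order of $S$. The crossing count of the pair of edge bundles at $b,b'$ is bounded below by $Q_{\pi(b)\pi(b')}$, the minimum number of adjacent transpositions needed to turn one cyclic order into the reverse of the other (Woodall's bound). This gives
\[
\ucr{K_{m,n}} \ge \min_{x\in\Delta}\ \tfrac{n^2}{2}\, x^{\top} Q\, x - O(n),
\]
where $x$ ranges over the simplex of distributions on the $(m-1)!$ cyclic orders. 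The problem is then relaxed to the copositive/SDP bound $\min\{\langle Q,X\rangle : X\succeq 0,\ X\ge 0,\ \langle J,X\rangle=1\}$. To make this tractable I reduce by the symmetry group $S_m\times \mathbb{Z}_2$ acting on cyclic orders. Representation theory block-diagonalizes the SDP, in the spirit of Brosch--Polak, into blocks indexed by partitions of $m$.

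I expect the main obstacle to be computational. For $m\ge 14$ the number of cyclic orders is astronomically large, so even after symmetry reduction the blocks are huge. Brosch--Pikhurko's gain most likely comes from a refined relaxation: adding constraints coming from triples of $B$-vertices (a level-2 flag-type strengthening), and using AI-assisted search to find good dual certificates. My first attempt would be to produce an explicit dual feasible solution, namely a nonnegative plus PSD decomposition of $Q - tJ$ in the symmetry-adapted basis, with $t$ corresponding to $c_m\ge 0.935\cdot\frac{m}{m-1}$. I would verify it by exact rational rounding, which makes the bound rigorous without solving the full primal. If that is not enough, I would also use the triangle-type inequalities on triples of rotations.
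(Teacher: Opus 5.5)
The paper does not prove this statement; it is quoted from an unpublished manuscript. Within the paper it is a corollary of Theorem~\ref{thm:Knn}. There, a flag-algebra computation over (C1)-configurations of $K_{4,4}$ shows that more than a $0.24234$ fraction of the $\binom{n}{2}^2$ copies of $K_{2,2}$ are crossed, which gives $\ucr{K_{n,n}}>0.96936\,n^4/16+o(n^4)$. Your route is different, and its first half is sound. The averaging inequality $\ucr{K_{n,n}}\ge\frac{n(n-1)}{m(m-1)}\ucr{K_{m,n}}$ is correct. So is the resulting factor $\frac{m-1}{m}$ for odd $m$ (and $\frac{m-2}{m-1}$ for even $m$), and so is the cyclic-order bound $\ucr{K_{m,n}}\ge\frac{n^2}{2}\alpha_m-O(n)$ with $\alpha_m=\min_{x\in\Delta}x^{\top}Qx$.

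There is a genuine gap: the whole quantitative content of the theorem lies in the step you do not carry out. That step is exhibiting a specific $m$ and a certified lower bound on $\alpha_m$, equivalently on $c_m$, that is large enough.

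\begin{itemize}
\item Since $\ucr{K_{m,n}}\le Z(m,n)$, the averaging step alone caps your bound at $\frac{14}{15}<0.935$ for $m=15,16$. So your range ``$m\approx 15$--$20$'' must start at $m=17$.
\item At $m=17$ you need $c_{17}\ge 0.935\cdot\frac{17}{16}\approx 0.9934$ (or $c_{19}\ge 0.987$, $c_{21}\ge 0.982$). That means a lower bound on $\alpha_m$ within about $0.7\%$ of $\frac12\lfloor m/2\rfloor\lfloor (m-1)/2\rfloor$, the value attained by the distribution uniform on a cyclic order and its reverse.
\item The relaxations you describe are essentially those behind the bounds quoted in the introduction. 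Back-solving those values of $L$ with your own formula gives $c_7\approx 0.968$, $c_9\approx 0.966$ and $c_{13}\approx 0.962$. These drift downward rather than toward $0.99$.
\item Those computations stopped at $m=13$, whereas $m=17$ already involves $16!\approx 2\cdot 10^{13}$ cyclic orders.
\end{itemize}

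The suggested triple constraints and AI-assisted search do not argue that a gap of this size closes. As written, the proposal reduces the theorem to an open and likely intractable computation rather than proving it. The paper's approach avoids this obstacle because it samples both colour classes symmetrically, using small $K_{4,4}$ samples of a limit object. It therefore never pays the one-sided $\frac{m-1}{m}$ loss that forces your reduction to large $m$.
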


 Norin and Zwols~\cite{NorinZwols} announced in 2014 the following.
\begin{theorem}[Norin and Zwols~\cite{NorinZwols}]\label{thm:NZKnn}
$L > 0.905$. In other words $\ucr{K_{n,n}} > 0.905 n^4/16+o(n^4)$.
\end{theorem}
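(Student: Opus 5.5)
Norin and Zwols obtain $L>0.905$ by combining flag algebras with the exact small-case results, and I would follow the same route. The starting point is to view a good drawing of $K_{n,n}$ as a combinatorial object. A subdrawing induced by a small vertex set $S$ with $|S\cap A|=a$ and $|S\cap B|=b$ determines a \emph{rotation system}: the cyclic order of edges at each vertex. By Kleitman's parity arguments and the standard reduction, the crossing count of a $K_{2,2}$ pair is governed by these rotations up to parity, and $\ucr{K_{m,n}}$ for good drawings is determined by rotation data modulo even corrections. So I would take the flags to be rotation systems of small complete bipartite graphs. Concretely, for each vertex $v\in A$ I record the cyclic permutation of the $B$-vertices around $v$, and symmetrically for $B$.

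Double counting gives
\[
\ucr{K_{n,n}} \;\ge\; \sum_{\{a,a'\}\subset A}\ \sum_{\{b,b'\}\subset B} \mathrm{cr}(a,a';b,b'),
\]
where each term counts crossings between edges of the corresponding $K_{2,2}$. More usefully, I would work with $K_{2,n}$ pairs. For two vertices $a,a'\in A$, Woodall's result gives a lower bound on crossings between the stars at $a$ and $a'$ in terms of the \emph{antipodal distance} between their rotations. That distance is the minimum number of adjacent swaps taking one rotation to the reverse of the other. This expresses the crossing density as a linear functional on densities of small rotation-system configurations, namely pairs or triples of $A$-vertices together with a few $B$-vertices.

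The lower bound is then a flag algebra SDP. The objects are colored rotation systems on at most $k$ vertices of $A$ together with small subsets of $B$. The quantity to minimize is the limiting crossing density, bounded below by the Woodall-type linear functional. Consistency constraints, including Kleitman's $\ucr{K_{5,n}}=Z(5,n)$ applied to all 5-subsets, are added as linear inequalities on densities. Cauchy--Schwarz certificates come from positive semidefinite matrices indexed by flags over types of size one or two. Solving the SDP numerically and rounding to an exact rational certificate should give a bound exceeding $0.905\cdot n^4/16$.

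The main obstacle is getting a finite model that is rich enough yet computationally feasible. The number of rotation systems grows factorially, so I would first restrict to configurations of three $A$-vertices with rotations on about five or six $B$-vertices. I would then exploit the symmetry of swapping $A$ and $B$, and of reversing orientation, to block-diagonalize the SDP. If the bound from pairwise antipodal distances alone falls short of $0.905$, the first strengthening I would try is adding the exact values of $\ucr{K_{7,7}}$-type small cases, or the de Klerk et al.\ bounds for $K_{7,n}$, as extra linear constraints. The last step is the rigorous rounding of the numerical certificate.
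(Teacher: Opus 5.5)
Your plan has a built-in ceiling that rules out $0.905$ at the sizes you propose. Since your flags record only rotations, the functional your SDP actually minimizes is Woodall's bound: $\binom{k}{2}^{-1}$ times the expected antipodal distance $d(\pi_a|_S,\overline{\pi_{a'}|_S})$ over pairs $a,a'\in A$ and $k$-sets $S\subseteq B$. No flag-algebra certificate can exceed the value of this functional on an actual limit object, no matter how many $A$-vertices your flags contain. Take the limit of Zarankiewicz's drawings.

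\begin{itemize}
\item Two $A$-vertices on opposite sides have mutually reversed rotations, so their distance is $0$.
\item Two $A$-vertices on the same side have identical rotations, so their distance is at most $Z(3,k)=\lfloor k/2\rfloor\lfloor (k-1)/2\rfloor$, \emph{whatever} the split of $S$ between the upper and lower halves.
\item The true expected number of crossings in such a same-side $K_{2,k}$ is $\frac12\binom{k}{2}$, because each pair of $B$-vertices on the same half gives exactly one crossing. Unbalanced splits cost more than $Z(3,k)$, but the rotations cannot see this.
\end{itemize}

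Hence the best you could ever certify is $L\ge Z(3,k)/\bigl(\frac12\binom{k}{2}\bigr)$. This equals $(k-1)/k$ for odd $k$ and $(k-2)/(k-1)$ for even $k$. For five or six $B$-vertices it is $0.8$, no better than Kleitman. Passing $0.905$ needs $k\ge 11$, where the rotation configurations are far too numerous. The SDP bounds of de Klerk et al.\ and of Brosch and Polak rest on the same inequality and give only $0.8878$ even with $13$ vertices.

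Your proposed strengthenings do not help. Your objects carry no crossing information, so Kleitman's $K_{5,n}$ theorem or the value of $\ucr{K_{7,7}}$ cannot be written as elements of your algebra. Used separately, they are just another lower bound on the same crossing density, namely $0.8$ in Kleitman's case.

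The motivating claim is also false. Rotations cannot determine whether a $K_{2,2}$ crosses, even up to parity: every vertex of $K_{2,2}$ has degree $2$, so its rotation system carries no information. Kleitman's parity theorem concerns the total crossing count of $K_{m,n}$ with $m,n$ odd.

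The missing idea is to keep the crossing data itself in the flags, so the objective is the exact density of crossed $K_{2,2}$'s. That density equals $1/4$ on Zarankiewicz's construction, so there is no ceiling below $1$. This is what the paper does. It only cites this statement from Norin and Zwols, but the statement follows from Theorem~\ref{thm:Knn}. That proof enumerates all good drawings of $K_{4,4}$ with the procedure of Section~\ref{sec:generate} and records, for every pair of edges, whether they cross (isomorphism (C1)). It then solves the flag-algebra SDP directly for the crossed-$K_{2,2}$ density, obtaining more than $\crKnnLemmaCertified$ and hence $L>\crKnnCertified>0.905$.
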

Their result was presented in a lecture at Banff Research Center, but it was never published in a paper. Norin and Zwols were the first to apply flag algebras to crossing numbers. Their result inspired other applications of flag algebras in graph drawing and geometry~\cite{Gethner2016,BaloghHill2019,goaoc2018}. 
By utilizing a semidefinite programming solver developed by Lidick\'y and Volec~\cite{lpsdp2026}, we were able to push the flag algebras calculation further and obtained the following improvement. 
\begin{theorem}\label{thm:Knn}
$L > \crKnnCertified$. In other words $\ucr{K_{n,n}} > \crKnnCertified n^4/16+o(n^4)$.
\end{theorem}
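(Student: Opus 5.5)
We will prove Theorem~\ref{thm:Knn} by a flag algebra computation in the style of Norin and Zwols. The first step is to choose a combinatorial encoding of a drawing of $K_{n,n}$ that is rich enough to see crossings but still a finite structure suitable for flag algebras. Following Kleitman and de Klerk et al., fix a good drawing $D$ of $K_{n,n}$ with parts $A$ (red) and $B$ (blue). For each red vertex $a$, the rotation of $a$ is the cyclic order in which its edges to $B$ leave $a$. For any two red vertices $a,a'$, the number of crossings between the stars at $a$ and $a'$ is at least a quantity determined by the two rotations: it is the minimum number of adjacent swaps needed to turn one rotation into the reverse of the other, as in Woodall's antipodal distance. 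Symmetrically we use rotations at blue vertices. The structures we take are therefore vertex-two-coloured objects in which every vertex carries a cyclic ordering of the opposite class. Concretely, an induced substructure on $k$ red and $\ell$ blue vertices records the rotations restricted to the chosen vertices. The crossing count of $D$ is at least half the sum over monochromatic pairs of these pair-lower-bounds, which is a linear combination of densities of small substructures. The cleanest base structure is a single $K_{2,2}$-like configuration, or more precisely the number of crossings among the edges spanned by two red and two blue vertices. We add the local consistency constraints that any genuine drawing satisfies, namely the finite list of realizable subdrawings of $K_{3,3}$ or $K_{2,k}$ together with their crossing counts. Each subdrawing on few vertices is a finite object, and the crossing count of $D$ equals the average, over $K_{2,2}$ subdrawings, of crossings in those subdrawings, scaled by $\binom n2^2$.

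Next we set up the extremal problem. Let $\mathcal{F}$ be the family of all realizable good drawings of $K_{s,t}$ up to isomorphism, for a fixed small size; we would aim for $s+t=7$ or $8$, for example drawings of $K_{3,4}$ and $K_{4,4}$ described combinatorially by rotation systems plus crossing information. Enumerating these by computer is the first concrete task. Using Kleitman's parity result and small-case crossing numbers as sanity checks, we obtain the objective $\mathrm{cr}(D)/\binom n2^2 = \sum_{F\in\mathcal F} c_F\, d(F,D)$, where $c_F$ is the average number of crossings on a $K_{2,2}$ inside $F$. We want to minimise this objective over limits. The standard flag algebra relaxation chooses types $\sigma$ (small labelled partial drawings) and flags over them, and seeks positive semidefinite matrices $Q_\sigma$. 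The averaged products $\llbracket f^T Q_\sigma f\rrbracket_\sigma$ expand into combinations of $d(F,\cdot)$, and we require $c_F - \sum_\sigma (\text{coefficient of } F) \ge \lambda$ for all $F$. Then $\lambda$ lower-bounds the limiting crossing density, and we aim for $\lambda \cdot 16\cdot \tfrac14 > \crKnnCertified$ after normalisation. The extra structure (which edges cross, rotations at both sides) should let the SDP exploit both Kleitman-type bounds simultaneously. We would also include symmetry reduction, swapping colours and applying the mirror reflection, to shrink the SDP.

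The final steps are solving the SDP with the solver of Lidick\'y and Volec and rounding to an exact rational certificate. Rounding projects onto the kernel spanned by near-extremal constructions, namely the Zarankiewicz drawing and antipodal geodesic drawings, and then verifies positive semidefiniteness in exact arithmetic. I expect the main obstacle to be size. The number of realizable drawings of $K_{4,4}$, or of pairs of rotations on $8$ vertices, is large, and the gap to $1$ that the method leaves depends heavily on flag size. Getting from about $0.905$ to $\crKnnCertified$ will likely require the largest feasible configurations. If full drawings are too big, the first thing I would try is to keep only rotation data at one side on larger sets, say $K_{4,5}$, with Woodall-type pairwise bounds, and to add constraints from exact small crossing numbers like $\ucr{K_{5,n}}=Z(5,n)$ as additional linear inequalities.
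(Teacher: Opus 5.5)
Your plan is essentially the paper's proof. The paper runs a flag-algebra SDP over all good drawings of $K_{4,4}$, using the Lidick\'y--Volec solver and the Kiem--Pokutta--Spiegel symmetry reduction, to show that the density of crossed $K_{2,2}$'s exceeds $0.24234$; it then concludes $\ucr{K^D_{n,n}} > 0.24234\binom{n}{2}^2 = 4\cdot 0.24234\cdot n^4/16 + o(n^4)$, which is exactly your $4\lambda$ normalisation.

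The one substantive difference is the encoding. The paper deliberately forgets rotations and keeps only which pairs of edges cross (isomorphism (C1)), giving $1{,}931{,}529$ configurations. Attaching rotation data, as you propose, moves the count toward the $18{,}031{,}850$ homeomorphism classes of $K_{4,4}$ drawings and is likely out of reach. The Woodall antipodal-distance detour is also unnecessary, and it is not a bounded-size density anyway: each $K_{2,2}$ in a good drawing has at most one crossing, so the crossed-$K_{2,2}$ density is already the exact objective.
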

In a conference proceedings version of this paper~\cite{Balogh2023conf}, we show a significantly weaker lower bound $\ucr{K_{n,n}} > 0.9118 \cdot Z(n,n) + o(n^4)$.

It is common to look at restrictions of the drawings to try and get improved bounds. We recall that in a {\em rectilinear drawing} of a graph each edge is drawn as a straight segment, and the \emph{rectilinear crossing number} $\rcr{G}$ of a graph $G$ is the minimum number of pairwise crossings of edges in a rectilinear drawing of $G$. Note that Zarankiewicz's original drawings of $K_{m,n}$ are actually rectilinear. For the rectilinear version of Zarankiewicz's conjecture, we can further improve the bound from Theorem~\ref{thm:Knn}.
\begin{theorem}\label{thm:rKnn}
$\overline{cr}(K_{n,n}) > 0.987 n^4/16+o(n^4)$.
\end{theorem}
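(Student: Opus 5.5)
We prove Theorem~\ref{thm:rKnn} with the flag algebra method, in the same framework used for Theorem~\ref{thm:Knn}, but working over a richer and more rigid class of structures: rectilinear drawings of complete bipartite graphs, recorded up to their combinatorial type. Since every rectilinear drawing is in particular a good drawing, the constraints behind Theorem~\ref{thm:Knn} remain valid. Rectilinearity lets us encode more, so the feasible region shrinks and the bound improves.

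\emph{Choice of objects.} Fix a rectilinear drawing of $K_{n,n}$ in general position, with red vertices $A$ and blue vertices $B$. The number of crossings equals the number of induced $K_{2,2}$ subdrawings that have a crossing, and each crossing $K_{2,2}$ has exactly one crossing. So
\[
\rcr{K_{n,n}} = \binom{n}{2}^2 \cdot d(\text{crossed } K_{2,2}),
\]
where $d$ denotes the density among pairs-of-pairs.

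The combinatorial type of a rectilinear drawing is determined by the order type (orientations of all triples) of the point set $A\cup B$, two-coloured. We therefore take our flags to be two-coloured order types. The objective density is then a linear combination of densities of $2+2$ point configurations: a red pair $a_1a_2$ and a blue pair $b_1b_2$ produce one crossing when the four points are in convex position with colours alternating around the hull.

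Using order types, rather than rotation systems, automatically enforces all realizability axioms on small configurations, for example the Grassmann--Plücker/chirotope axioms on $5$ points. These axioms rule out many of the locally optimal but non-rectilinear configurations that limit the topological bound.

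\emph{Key steps.}
\begin{enumerate}
\item Enumerate all two-coloured realizable order types on $N$ points with balanced colour classes. Balance is needed because sampling from $K_{n,n}$ gives hypergeometric colour counts, so the type densities should be taken with $k$ red and $N-k$ blue points. We would choose $N=7$ or $8$, with the maximum allowed by computation, and verify realizability using known order-type databases.
\item Express the crossing density of $K_{2,2}$ as a linear combination of the densities of these $N$-point types.
\item Choose types, meaning small labelled coloured point sets, and generate the corresponding flags.
\item Set up the semidefinite program: minimise the crossing density subject to positivity of the flag squares averaged over the types. We would also add the known exact lower bounds $\ucr{K_{5,n}}$ via Kleitman as linear constraints, which are valid for rectilinear drawings as well.
\item Solve the program with the solver of Lidick\'y and Volec.
\item Round the solution to an exact rational certificate showing that the crossing density is at least $0.987/4$ asymptotically. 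This gives $\rcr{K_{n,n}} \ge 0.987\, n^4/16 + o(n^4)$.
\end{enumerate}

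\emph{Main obstacle.} The bottleneck is size. The number of two-coloured order types grows very quickly: order types on $8$ points number $3315$ even before colouring. The program therefore becomes enormous, and rounding it exactly is delicate. To handle this, we would first exploit symmetry, namely the reflection of the plane and the swap of the two colour classes, to block-diagonalise the program. Next we would restrict the types to those that contribute on a smaller trial computation. Finally we would round with a projection onto the kernel forced by near-extremal configurations, namely Zarankiewicz-type drawings. If $N=8$ is infeasible, a fallback is to use $N=7$ with carefully chosen unbalanced colour splits and to check whether that already exceeds $0.987$.
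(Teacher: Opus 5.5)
Your plan is essentially the paper's proof. The paper encodes rectilinear drawings by two-coloured order types; this is its isomorphism (C4), the left/right orientation of every ordered triple. It obtains all $4+4$ configurations by taking Aichholzer's database of order types on $8$ points and trying every split into $K_{4,4}$, which gives 112,401 configurations after symmetry reduction. It then certifies with flag algebras that a random $K_{4,4}$ has on average more than $8.8837$ crossings. Since $K_{4,4}$ contains $36$ copies of $K_{2,2}$, this is your target statement that the crossed-$K_{2,2}$ density exceeds $8.8837/36 > 0.987/4$. The conversion to $\overline{cr}(K_{n,n})$ by double counting is the same as yours.

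One statement in your plan is wrong, and it sits in Step 2, where the objective is defined: your crossing criterion is reversed. For red $a_1,a_2$ and blue $b_1,b_2$, the edges of $K_{2,2}$ form the $4$-cycle $a_1b_1a_2b_2$.
\begin{itemize}
\item If the four points are in convex position with colours alternating around the hull, this cycle is the boundary of the convex quadrilateral. The diagonals $a_1a_2$ and $b_1b_2$ are not edges, so there is \emph{no} crossing.
\item The crossing occurs exactly when the points are in convex position and the two red points are consecutive on the hull. Then both diagonals are red--blue edges and they cross. In Zarankiewicz's drawing, this is the case of two red points on the same side of the origin and two blue points on the same side.
\item In non-convex position, no two segments cross at all.
\end{itemize}
Encoded as you wrote it, the SDP would bound the density of a different configuration, which says nothing about crossings.

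Two smaller points:
\begin{itemize}
\item There is no hypergeometric issue. In the bipartite framework one samples $r$ red and $b$ blue vertices independently, so the flags are simply $\mathcal{F}_{4,4}$.
\item The paper needs neither the Kleitman constraints nor an $N=7$ fallback. Whether an $N=7$ computation would reach $0.987$ is unknown; the theorem rests on the actual $N=8$ certificate.
\end{itemize}
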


Another restriction one can make is that some given subdrawing is forbidden. 
This sort of condition is reminiscent of Tur\'an-type problems from extremal graph theory, which go as follows.
Let $F$ be a graph. What is the maximum number of edges in an $n$-vertex graph not containing $F$ as a (possibly induced) subgraph?
In our case, let $F$ be a drawing of $K_{3,4}$ with two crossings sharing exactly one vertex, see Figure~\ref{fig:K34}.
For example, for a vertex set $\{u_1,u_2,u_3,v_1,v_2,v_3,v_4\}$ the pairs of crossing edges may be $(u_1v_1, u_2v_2)$ and $(u_1v_4, u_3v_3)$.
This creates a local combinatorial constraint rather than a geometrical constraint. Observe that $F$ does not appear in Zarankiewicz's drawing of $K_{n,n}$. This restriction allows us to give an asymptotically correct bound. 

\begin{theorem}\label{thm:tKnn}
    If $K^D_{n,n}$ is a drawing of $K_{n,n}$ where no $K_{3,4}$ induces exactly two crossings sharing one vertex, then $K^D_{n,n}$ has at least $n^4/16 + o(n^4)$ crossings.
\end{theorem}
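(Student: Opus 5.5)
The natural approach is a flag algebra computation for drawings. We treat drawings of $K_{n,n}$ as combinatorial structures: the color classes $A$ and $B$, together with the set of crossing pairs of edges, i.e.\ pairs $\{a_1b_1,a_2b_2\}$ with $a_1\neq a_2$, $b_1\neq b_2$. The rotation systems can be recorded as well if needed. The crossing density is the probability that a random $K_{2,2}$ (two vertices from each side) has its two independent edges crossing, with $K_{2,2}$ counted via one of its two possible crossing pairs. The target is to show that this density is asymptotically at least $1/4$, since $n^4/16 = \frac14\binom{n}{2}^2\cdot 4\cdot\frac{1}{4}\cdot\ldots$. Equivalently, after normalizing, we compare against the density $1/4$ attained by Zarankiewicz's drawing, in which each $K_{2,2}$ is crossed with probability $1/4$ asymptotically.

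\textbf{Step 1: local constraints from small drawings.} The key earlier input is Kleitman's theorem that $\ucr{K_{5,n}}=Z(5,n)$ and, more generally, $\ucr{K_{m,n}}$ for $m\le 6$. These give linear constraints on densities of small subdrawings. We enumerate all "admissible" types on small vertex sets, for example $K_{3,3}$ or $K_{3,4}$ sub-configurations. Admissible means realizable as subdrawings of a good drawing of $K_{n,n}$, in which adjacent edges do not cross and independent edges cross at most once. From this list we discard those containing the forbidden configuration $F$: a $K_{3,4}$ with exactly two crossings sharing exactly one vertex.

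\textbf{Step 2: flag algebra SDP.} We set up the standard Razborov SDP with types on one or two vertices per side, for example a labeled $K_{1,1}$ or $K_{2,1}$. The goal is to prove that the crossing density is at least $1/4-o(1)$ over all admissible $F$-free limit objects. The forbidden configuration should make the problem tight in the flag algebra sense. In Zarankiewicz-like (antipodal) extremal constructions, the crossings interact in a rigid way, and $F$ is exactly the local pattern whose absence forces the count. We would first examine the extremal construction to identify the zero-density subdrawings, then impose complementary slackness to guess the exact SDP certificate. Finally we round it to a rational certificate so that the bound is exactly $1/4$, not merely numerical.

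\textbf{Main obstacle.} The hard part is obtaining an \emph{exact} bound of $1/4$ rather than $0.99\ldots$. This requires the extremal structure to be essentially forced and the SDP solution to be rounded exactly. Non-uniqueness of extremal drawings, such as the many geodesic antipodal drawings, may make the kernel of the certificate large and the rounding delicate. If the direct flag computation is not tight, we would try a combinatorial reduction instead. First we show that $F$-freeness forces, for each vertex $a\in A$, a structure on the crossings involving edges at $a$, for instance a cyclic order in which crossings behave as in a rotation. Then we count via Kleitman's $K_{5,n}$ bound together with an averaging over $K_{m,n}$ sub-drawings. The $F$-free condition would be used to upgrade the averaging from $0.8$ to $1$ by showing that the small-case inequalities become simultaneously tight.
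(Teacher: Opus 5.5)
Your primary plan is essentially the paper's proof. The paper runs a flag algebra computation on $\mathcal{F}_{3,4}$ under the coarse isomorphism (C2), which records only which $2+2$ quadruples are crossed, deletes the configuration $F$ (leaving 354 configurations), and obtains the exact bound $9/2$ for the expected number of crossings in a random $K_{3,4}$; this is your crossed-$K_{2,2}$ density $1/4$, since $K_{3,4}$ contains $18$ copies of $K_{2,2}$, and double counting turns it into $n^4/16+o(n^4)$, while the Kleitman input in your Step 1 and your combinatorial fallback are not needed because the direct computation is already tight.
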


\begin{figure}
\begin{center}
    \begin{tikzpicture}
    \draw 
    (0,0) node[vertexA,label=left:{$u_2$}](u2){} 
    (1,0) node[vertexA,label=left:{$u_1$}](u1){} 
    (2,0) node[vertexA,label=right:{$u_3$}](u3){} 
    
    (0,1) node[vertexB,label=above:{$v_1$}](v1){} 
    (1,1) node[vertexB,label=above:{$v_2$}](v2){} 
    (1,-1) node[vertexB,label=below:{$v_3$}](v3){} 
    (2,-1) node[vertexB,label=below:{$v_4$}](v4){} 
    ;
    \draw
    (u1)--(v1) (u2)--(v2) (u3)--(v3) (u1)--(v4)
    ;
    \draw[dotted]
    (u1)--(v2)
    (u1)--(v3)
    (u2)--(v1)
    (u3)--(v4)
    (u2)--(v3)
    (u3)--(v2)
    (u2) to[bend right=40,looseness=1.5] (v4)
    (u3) to[bend right=40,looseness=1.5] (v1)
    ;
    \end{tikzpicture}
\end{center}
    \caption{We let $F$ denote a drawing of $K_{3,4}$ with two crossings sharing exactly one vertex, such as the one illustrated above. Here the solid edges are crossed, while the dotted edges are uncrossed.}
    \label{fig:K34}
\end{figure}

The problem of crossing number of the complete graph $K_n$ was first investigated by the artist Antony Hill in the late 1950s. 
The drawings of complete graphs found by Hill~\cite{Harary1963} are conjectured to be best possible.
\begin{conjecture}[Hill's conjecture]\label{conj:h}
\[
\ucr{K_n} = H(n):=
\frac14 
\left\lfloor \frac{n}{2} \right\rfloor
\left\lfloor \frac{n-1}{2} \right\rfloor
\left\lfloor \frac{n-2}{2} \right\rfloor
\left\lfloor \frac{n-3}{2} \right\rfloor.
\]
\end{conjecture}

Balogh, Lidick\'y and Salazar~\cite{BaloghHill2019} used flag algebras on the crossing number of the complete graph and obtained $\ucr{K_n} \geq 0.985 H(n) + o(n^4)$.
By running a larger program, we obtained an improvement.
\begin{theorem}\label{thm:hill}
$\ucr{K_n} \geq 0.9899 H(n) + o(n^4).$
\end{theorem}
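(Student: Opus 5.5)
We plan to prove Theorem~\ref{thm:hill} with a flag algebra computation on drawings of complete graphs, following the set-up of Balogh, Lidick\'y and Salazar~\cite{BaloghHill2019} but with larger flags. The objects of the theory are good drawings of $K_k$ up to isomorphism of their rotation systems. For a good drawing of $K_n$, the set of crossing pairs of edges is determined by the rotation system. We first restrict to \emph{good} drawings: adjacent edges do not cross, edges cross at most once, and no edge self-intersects. This is standard, since a crossing-minimal drawing can always be assumed good.

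Every crossing involves exactly four vertices. Hence $\ucr{D}=\sum_{S\in\binom{V}{4}} \ucr{D[S]}$, and normalizing gives
\[
\frac{\ucr{D}}{\binom{n}{4}} = \sum_{R} \ucr{R}\, p(R;D),
\]
where $R$ ranges over the (isomorphism classes of) good drawings of $K_k$ for a fixed $k$, and $p(R;D)$ is the induced density of $R$ in $D$. Dividing by $\binom{k}{4}$ turns this into a linear objective in densities of $k$-vertex subdrawings. Since $H(n)\sim \frac{3}{8}\binom{n}{4}$, the goal is to show that the limit density of crossings is at least $0.9899\cdot\frac{3}{8}$.

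The concrete steps are as follows.
\begin{enumerate}
\item Enumerate all rotation systems on $k$ vertices, for $k$ as large as computationally feasible (we would aim for $k=8$ or $9$). Keep only those that are realizable as good drawings; this is checkable by the known criterion that a rotation system is realizable iff all of its $5$-vertex subsystems are realizable.
\item Choose types $\sigma$, i.e.\ labeled drawings of $K_t$ with $t\equiv k \pmod 2$, together with flags of size $(k+t)/2$.
\item Write the standard flag algebra SDP: minimize the crossing density subject to $\sum_R p(R)=1$, $p\ge 0$, and the positive semidefiniteness of the averaged products $\llbracket f^T Q_\sigma f\rrbracket_\sigma$.
\item Solve the SDP numerically, using the solver of~\cite{lpsdp2026}, and obtain a bound exceeding $0.9899\cdot\frac38$.
\end{enumerate}
Symmetry will cut the size of the computation. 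Reflecting the drawing (reversing all rotations) preserves crossings, and we would also exploit the symmetry of each type to block-diagonalize the matrices $Q_\sigma$.

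Finally we certify the bound. We round the numerical $Q_\sigma$ to rational positive semidefinite matrices, for instance by a rational $LDL^T$ decomposition after projecting onto the orthogonal complement of the near-kernel. We then verify in exact arithmetic that for every $k$-vertex drawing $R$, the coefficient $\ucr{R}/\binom{k}{4}$ minus the contribution of the SDP part is at least the claimed constant. The standard flag algebra argument then gives $\ucr{K_n}\ge 0.9899H(n)+o(n^4)$.

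The main obstacle is computational scale. The number of realizable rotation systems on $8$ or $9$ vertices is enormous, so enumeration, computing the flag products, and solving the SDP all strain memory. The second difficulty is that the bound is close to the SDP optimum, so rounding is delicate. We would first try adding more types selectively and using the symmetry reductions. Only if the gap to $0.9899$ remained would we move to the next $k$ or add extra linear constraints, such as known exact values of $\ucr{K_{k}}$ for small $k$ restricted to subdrawings.
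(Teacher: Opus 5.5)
Your proposal is correct and follows the paper's route. The paper runs a flag algebra SDP over realizable rotation systems on $8$ vertices (its $\mathcal{E}_8$, with $5{,}370{,}725$ configurations, solved with lpsdp) and shows the density of non-crossing $K_4$'s is at most $22.007/35$. This is equivalent to your crossing-density bound $12.993/35 > 0.9899\cdot\frac38$, which the paper converts using $\binom{n}{4}=\frac{64}{4!}H(n)+o(n^4)$, exactly as you planned. Your $k=9$ fallback is not realistic, since there are vastly more realizable rotation systems on $9$ vertices, so the argument rests entirely on the $k=8$ computation.
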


In the next section we give a brief introduction to flag algebras, the main method used in this paper. Then we describe the specific setup we use in this paper. 
Sections~\ref{sec:knn}, \ref{sec:rectinilear}, \ref{sec:turan} and \ref{sec:hill} deal with the proofs of Theorems~\ref{thm:Knn},~\ref{thm:rKnn}, \ref{thm:tKnn} and \ref{thm:hill}, respectively.

\subsection{Notes on AI}
Both Conjectures~\ref{conj:Z} and \ref{conj:h} were claimed to be true by OpenAI~\cite{OAI:The-crossing-number-of-complete-bipartite-graphs-September-23-2026,OAI:The-crossing-number-of-complete-graphs-September-23-2026}.
 ChatGPT-6 was used to assist with writing Section~\ref{sec:generate}.
It was also used to check for typos and correctness of the manuscript.

\subsection{Notation}
We distinguish a graph $G$ from its drawing by denoting a drawing of $G$ by $G^D$. 
We denote the crossing number of a graph $G$ by $\ucr{G}$. For a drawing $G^D$, $\ucr{G^D}$ is the number of crossings in $G^D$. 
For a set of vertices $X$, we denote the induced subgraph of $G$ by $G[X]$ and $G^D[X]$ denotes the subdrawing of $G^D$ induced by $X$. 

\section{Flag Algebras}\label{sec:fa}

\subsection{General Bipartite Setup}
Flag algebra method is a tool developed by Razborov~\cite{Razborov2007} for solving problems in extremal combinatorics. It has a variety of applications due to its generality. It can be applied to graphs, hypergraphs, oriented graphs, and other combinatorial structures~\cite{Balogh2017RT,Hladk2016,Choi2020,Balogh2022}.
Here we give a very brief introduction adapted for drawings of complete bipartite graphs by stating the basic definitions and claims. Proofs of the claims made in this section can be found in \cite{Razborov2007}.

We call a drawing of a graph \emph{good} if \\(a) adjacent edges intersect only at their common endvertex;\\ (b) any two edges have at most one intersection;\\ (c) intersections of two nonadjacent edges are crossings, not tangential; and \\ (d) no three edges cross at a single point.

Crossing-minimal drawings can be chosen to be good. Since we investigate crossing-minimal drawings of graphs and their induced subdrawings, we assume that all drawings are good.

\newcommand{\K}{K^{D}}

To streamline the exposition, we assume that complete bipartite graphs are vertex 2-colored, where one part is red and the other part is blue. Notation $\K_{r,b}$ means a drawing of $K_{r,b}$ where the red part has $r$ vertices and the blue part has $b$ vertices. We use $V_r(\K_{r,b})$ and $V_b(\K_{r,b})$ to denote the set of red and blue vertices, respectively.
The following text uses the notion of isomorphism of two drawings of graphs that we specify later. The choice of isomorphism is not important for the theory but it is crucial for applications. For now we require that there are only finitely many non-isomorphic drawings of $K_{r,b}$ for each $r,b \geq 0$.

Let $\K_{r_1,b_1}$ and $\K_{r_2,b_2}$ be two drawings, where $r_1 \leq r_2$ and $b_1 \leq b_2$. 
Let $X_r \subseteq V_r(\K_{r_2,b_2})$ and $X_b \subseteq V_b(\K_{r_2,b_2})$ be subsets of red and blue vertices respectively chosen independently and uniformly at random from the red and blue vertex sets, respectively, where $|X_r|=r_1$ and $|X_b|=b_1$. 
Denote by $P(\K_{r_1,b_1}, \K_{r_2,b_2})$ the probability that $\K_{r_2,b_2}[X_r \cup X_b]$ is isomorphic to $\K_{r_1,b_1}$. 
This is also a density of $\K_{r_1,b_1}$ in $\K_{r_2,b_2}$.

Let $\mathcal{F}$ denote the collection of all drawings of all complete bipartite graphs up to isomorphism. 
Let $\mathcal{F}_{r,b}$ denote those drawings in $\mathcal{F}$ with red and blue parts of size $r$ and $b$, respectively. Let $\mathbb{R}\mathcal{F}$ denote the set of all formal finite linear combinations  of drawings in  $\mathcal{F}$ with real coefficients.
Let $\mathcal{K}$ be a linear subspace generated by all linear combinations of
\[
\K_{r,b} -  \sum_{\K \in \mathcal{F}_{r',b'}} P(\K_{r,b}, \K)\K,
\]
where $r' \geq r$ and $b' \geq b$.

The algebra $\mathcal{A}$ is the space $\mathbb{R}\mathcal{F}$ factorized by $\mathcal{K}$.
Observe that addition and multiplication by a real number are naturally defined in $\mathcal{A}$.
Next we introduce a multiplication of two drawings in $\mathcal{F}$.
Let $\K_{r_1,b_1}, \K_{r_2,b_2} \in \mathcal{F}$. Fix one $\K \in \mathcal{F}_{r_1+r_2, b_1+b_2}$.
Let $X_{r_1} \subseteq V_r(\K)$ and $X_{b_1}  \subseteq V_b(\K)$ 
be subsets picked uniformly at random, where $|X_{r_1}|=r_1$ and $|X_{b_1}|=b_1$. Let $X_{r_2} = V_r(\K) \setminus X_{r_1}$ and $X_{b_2} = V_b(\K) \setminus X_{b_1}$.
Denote by $P(\K_{r_1,b_1}, \K_{r_2,b_2}; \K)$ the probability that $\K[X_{r_1}\cup X_{b_1}]$ is isomorphic to  $\K_{r_1,b_1}$ and $\K[X_{r_2}\cup X_{b_2}]$ is isomorphic to  $\K_{r_2,b_2}$. 
Define
\begin{align}
\label{eq:multiplicaton}
\K_{r_1,b_1} \cdot \K_{r_2,b_2} = \sum_{\K \in \mathcal{F}_{r_1+r_2, b_1+b_2}} P(\K_{r_1,b_1}, \K_{r_2,b_2}; \K) \K.
\end{align}
This multiplication has a unique linear extension to $\mathbb{R}\mathcal{F}$ which yields a well-defined multiplication in $\mathcal{A}$.
This is proved formally in~\cite[Lemma 2.4]{Razborov2007}.
Notice that a multiplication of drawings is a linear combination of some other drawings on more vertices.

For each $\sigma$ in $\mathcal{F}$, define all drawings with a fixed labeled copy of $\sigma$ by $\mathcal{F}^\sigma$.
It is possible to define an algebra $\mathcal{A}^\sigma$ built from $\mathcal{F}^\sigma$ in a similar way as $\mathcal{A}$ is built from $\mathcal{F}$. 
See \cite{BaloghHill2019} or \cite{Razborov2007} for more details.

A sequence of drawings of graphs $(\K_{n,n})_{n \geq 1}$ is said to be \emph{convergent} if for every drawing $\K \in \mathcal{F}$ the following limit exists
\[
\lim_{n\to\infty} P(\K, \K_{n,n}).
\]
A compactness argument using Tychonoff’s theorem yields that every infinite sequence has a convergent subsequence. 
Let $(\K_{n,n})_{n \geq 1}$ be a convergent sequence.
Let $\phi(\K) := \lim_{n\to\infty} P(K^D, \K_{n,n})$ for every $\K \in \mathcal{F}$
and extend $\phi$ to $\mathcal{A}$ by linearity. 
The mapping $\phi$ can be seen as the \emph{limit of the sequence} $(\K_{n,n})_{n \geq 1}$.
The resulting mapping $\phi: \mathcal{A} \to \mathbb{R}$ is actually a homomorphism
satisfying $\phi(\K) \in [0,1]$ for every $\K \in \mathcal{F}$.

Let $\Hom$ be the set of all unital algebra homomorphisms $\psi$ from the algebra $\mathcal{A}$ to $\mathbb{R}$ such that $\psi(\K) \geq 0$ for every $\K \in \mathcal{F}$. This set of homomorphisms is exactly the set of all limits of convergent sequences in $\mathcal{F}$, see \cite[Theorem 3.3]{Razborov2007}.

Similarly to $\Hom$, it is possible to define $\text{Hom}^+(\mathcal{A}^\sigma,\mathbb{R})$ as limits of convergent sequences where each drawing has a fixed labeled subdrawing of $\sigma$.
Finally, there is a linear \emph{unlabeling operator}
$\left\llbracket . \right\rrbracket_\sigma : \mathcal{A}^\sigma \to \mathcal{A}$ such that if $A \in \mathcal{A}^\sigma$ satisfies $\psi(A) \geq 0$ for all $\psi \in \text{Hom}^+(\mathcal{A}^\sigma, \mathbb{R})$, then $\phi(\left\llbracket A \right\rrbracket_\sigma)\geq 0$ for all $\phi \in \text{Hom}^+(\mathcal{A}, \mathbb{R})$.

The proofs in the paper are constructed as follows.
For fixed $r$ and $b$ we want to find a constant $C$ such that for all $\phi \in \Hom$
\begin{align}\label{eq:C}
\phi\left(  \sum_{F \in \mathcal{F}_{r,b}} \ucr{F} \, F \right) \geq C.
\end{align}
Letting $C :=  \min_{F \in \mathcal{F}_{r,b}} \ucr{F}$ would work but it may be improved.
We do it using sums of squares obtained in $\mathcal{A}^\sigma$ for various  $\sigma$.
Denote by $T$ a set of types, some labeled drawings from  $\mathcal{F}$.
We do the following for each $\sigma \in T$.
Pick $r',b' \geq 1$ such that 
$2r'-|V_r(\sigma)| = r$ and $2b'-|V_b(\sigma)| = b$. 
This is necessary since we will perform multiplications and we need the result as a linear combination in $\mathcal{F}_{r,b}$; recall \eqref{eq:multiplicaton}.
Let $n_\sigma = |\mathcal{F}^\sigma_{r',b'}|$ and let $F^\sigma$ be a vector with an arbitrary ordering of elements from $\mathcal{F}^\sigma_{r',b'}$.
Then for every $\phi \in \Hom$ and a positive semidefinite matrix $M_\sigma \in \mathbb{R}^{n_\sigma \times n_\sigma}$ the following holds
\begin{align}\label{eq:M}
0 \leq \phi\left(  \left\llbracket  (F^\sigma)^T M_\sigma  F^\sigma \right\rrbracket_\sigma  \right) 
=
\phi\left( 
\sum_{F \in \mathcal{F}_{r,b}} c_F(M_{\sigma},\sigma) \, F
 \right),
\end{align}
where $c_F(M_\sigma,\sigma)$  are real numbers depending on $F$, $M_\sigma$ and $\sigma$.
Since some entries in $M_\sigma$ may be negative, also some $c_F(M_\sigma,\sigma)$ may be negative. This is what makes the approach work.
Equations \eqref{eq:C} and \eqref{eq:M} can be combined
since they are linear combinations in $\mathcal{F}_{r,b}$.
Using linearity of $\phi$ we get
\begin{align*}
\phi\left(  \sum_{F \in \mathcal{F}_{r,b}} \ucr{F} \,  F \right) 
&\geq 
\phi\left(  \sum_{F \in \mathcal{F}_{r,b}} \ucr{F} \, F \right) 
- 
\underbrace{
\sum_{\sigma \in T} \phi\left(  \left\llbracket  (F^\sigma)^T M_\sigma  F^\sigma \right\rrbracket_\sigma  \right)
}_{\geq 0}
=
\phi\left(  \sum_{F \in \mathcal{F}_{r,b}} c_F F \right)
\\
&\geq
\min_{F \in \mathcal{F}_{r,b}}(c_F) \cdot 
\underbrace{\phi\left(  \sum_{F \in \mathcal{F}_{r,b}}  F \right)}_{=1} 
=
\min_{F \in \mathcal{F}_{r,b}}(c_F) 
=
C,
\end{align*}
where the coefficients $c_F$ are functions of the $M_\sigma$s since they are obtained as
$
c_F := \ucr{F} - \sum_{\sigma} c_F(M_\sigma,\sigma)
$.
The goal is to obtain the largest possible $C$, which means solving
\begin{align}\label{eq:sdp}
\max_{M_\sigma \succeq 0, \sigma\in T}\min_{F \in \mathcal{F}_{r,b}}(c_F). 
\end{align}
This is a semidefinite program that can be numerically solved. We used solvers CSDP~\cite{Borchers1999} and Mosek~\cite{Mosek}.
Notice that if one proves for some $A \in \mathcal{A}$ that
$\phi(A) \geq 0$
for every $\phi \in \Hom$, then
$-y \cdot \phi(A)$ for a parameter $y \geq 0$ can be added to the calculation. 

Finally, in order to obtain a rigorous bound, the numerical solution needs to be rounded to a rational solution. It can be done using the software we developed in SageMath~\cite{sagemath}. For more details about ideas behind the rounding see~\cite{Balogh2014,Baber2011}.
The proofs are certified by the positive semidefinite the matrices $M_\sigma$s or decompositions as sums-of-squares that can be obtained at \url{http://lidicky.name/pub/knn/}. There is no need to solve a semidefinite programs in order to verify the solution.

The machinery of flag algebras works with limits of convergent sequences. 
The same machinery can be applied also in $K^D_{n,n}$ but multiplication \eqref{eq:multiplicaton} will come with an additive error $O(1/n)$. Since the number of multiplications in our calculations is finite, the limit result can be transferred to a result about a (sufficiently large) graph at the cost of an $O(1/n)$ error.

\subsection{Isomorphism}

In order to apply the method of flag algebras to crossing numbers, 
we need to define when two drawings are isomorphic.
Let $G^D$ be a drawing of a graph.
The following types of information can be extracted from $G^D$.
\begin{itemize}
    \item[(C1)] For each pair of edges $e_1$ and $e_2$, whether or not they cross each other.
    \item[(C2)] For each quadruple $Q \subset V(G^D)$, whether  
     $G^D[Q]$ induces a crossing or not.
\end{itemize}
In case of a rectilinear drawing, the following information can be also extracted from $G^D$.
\begin{itemize}
    \item[(C3)] For vertices $u_1,u_2,u_3,u_4$ whether the line  $u_1u_2$ crosses the segment $u_3u_4$.
    \item[(C4)] For each ordered triple $u_1,u_2,u_3$ if $u_3$ is on the left or right of the line $u_1u_2$.
    \item[(C5)] For each vertex the cyclic clockwise order of its neighbors (that is, the {\em rotation} at the vertex). 
\end{itemize}
The lists above are not exhaustive and there may be other possibilities of what information to extract from $G^D$.
Notice that distinct drawings can result in identical configurations in $\mathcal{F}$. 
Typically preserving more information about the drawings gives stronger bounds. On the other hand, flag algebra method needs to work with a list of all drawings in $\mathcal{F}_{r,b}$ for some fixed $r,b$, and larger $r,b$ usually give better bounds. For computational purposes, $|\mathcal{F}_{r,b}|$ needs to be under 200,000 for solving the resulting semidefinite program with CSDP. It is not clear if it is better to have smaller $r,b$ with configurations keeping more information from the drawing or larger $r,b$ and configurations with less information.  
With a new solver~\cite{lpsdp2026}, we were able to obtain bounds on instances where $|\mathcal{F}_{r,b}|$ is a few million.

\section{Generating General Drawings}\label{sec:generate}

For the flag algebra calculations we need all small configurations that can
occur in good drawings of complete bipartite graphs.  We first generate the
drawings themselves, identifying two drawings when a color-preserving
homeomorphism of the sphere maps one to the other.  In particular, no face is
distinguished as an outer face.  Once the drawings have been generated, we
retain only the combinatorial information required for a particular
calculation, such as (C1) or (C2), and reduce the resulting configurations
further up to isomorphism.  Thus distinct drawings at the first stage may give
the same configuration at the second stage.

A program for generating such drawings was developed by Norin and Zwols~\cite{Zwols2014}. 
The program works well for generating drawings of $K_{3,4}$ but fails after two days of running when generating drawings of $K_{4,4}$. Their program and ours match on the number of distinct drawings up to $K_{3,4}$ after accounting for isomorphism via vertex relabeling.
Since we failed to fix the program, we implemented our own version and describe the principles below, since other researchers may find it interesting. ChatGPT-6 was able to identify the mistake which was an integer overflow. 
However, the program has not finished running within a week.

The construction begins with $K_{2,2}$.  The algorithm describes a general process for extending a drawing of a graph to drawings of a supergraph.  We start with $K_{2,2}$ because it is a subgraph of our desired graphs and its complete set of good drawings is easy to describe.  Up to homeomorphism of the sphere
there are two good drawings of $K_{2,2}$, according to whether or not the
drawing contains a crossing; see Figure~\ref{fig:k22-initial}.  Larger
drawings are obtained recursively by adding one vertex at a time, followed by
all edges incident with that vertex.

\begin{figure}[htbp]
  \centering
  \begin{tikzpicture}[
      every node/.style={font=\small},
      oldedge/.style={draw=black!70,line width=.85pt}
    ]
    \begin{scope}[xshift=-2.8cm]
      \node[vertexA,label=above:$a_1$] (pa1) at (0,1.05) {};
      \node[vertexB,label=right:$b_1$] (pb1) at (1.35,0) {};
      \node[vertexA,label=below:$a_2$] (pa2) at (0,-1.05) {};
      \node[vertexB,label=left:$b_2$] (pb2) at (-1.35,0) {};
      \draw[oldedge] (pa1)--(pb1)--(pa2)--(pb2)--(pa1);
      \node at (0,-1.75) {(a)};
    \end{scope}

    \begin{scope}[xshift=2.8cm]
      \node[vertexA,label=left:$a_1$] (ca1) at (-1.35,.95) {};
      \node[vertexA,label=left:$a_2$] (ca2) at (-1.35,-.95) {};
      \node[vertexB,label=right:$b_1$] (cb1) at (1.35,.95) {};
      \node[vertexB,label=right:$b_2$] (cb2) at (1.35,-.95) {};
      \draw[oldedge] (ca1)--(cb1);
      \draw[oldedge] (ca2)--(cb2);
      \draw[oldedge] (ca1)--(cb2);
      \draw[oldedge] (ca2)--(cb1);
      \node at (0,-1.75) {(b)};
    \end{scope}
  \end{tikzpicture}
  \caption{The two good drawings of $K_{2,2}$ up to homeomorphism of the
  sphere: (a) the planar drawing and (b) the drawing with one crossing.}
  \label{fig:k22-initial}
\end{figure}

Suppose that $G^D$ is a drawing that has already been constructed.  We work
with its \emph{planarization} $P(G^D)$, obtained by replacing every crossing of
$G^D$ by a vertex of degree four.  Officially, $P(G^D)$ is a graph, and a particular
drawing should be written as $P(G^D)^D$. Since we never consider any drawing of $P(G^D)$
other than the one that agrees with the underlying $G^D$, we drop the second superscript
and use $P(G^D)$ to refer to the drawing. Each original edge is then  represented 
by a path in $P(G^D)$.  We call the portions of these paths between consecutive 
vertices of $P(G^D)$ \emph{elementary segments}.  The faces of $P(G^D)$ are
represented by their boundary walks rather than by simple cycles, since the boundary of 
a face may encounter the same vertex or elementary segment more than once.

To insert a new edge $xy$, we consider the face-dual of $P(G^D)$.  Its vertices
are the faces of $P(G^D)$, and crossing an elementary segment gives an adjacency
between the two face-sides incident with that segment.  A route for $xy$ is
therefore described by an alternating sequence
\[
   F_0,s_1,F_1,s_2,\ldots,s_k,F_k,
\]
where $s_i$ is an elementary segment incident with both $F_{i-1}$ and $F_i$.
Crossing $s_i$ means that the new edge crosses the original edge containing
$s_i$.

Figure~\ref{fig:edge-routing} illustrates this operation.  A new blue vertex
$x$ is placed in the upper face of the planarization of the crossed drawing of
$K_{2,2}$.  The route in panel~(a) crosses the elementary segments $s_1$ and
$s_2$, passing successively through the faces $F_0,F_1,F_2$.  Panel~(b) shows
the corresponding insertion of the edge $xa_2$.  The two crossings become new
vertices $q_1$ and $q_2$ of the planarization, and the crossed elementary
segments are subdivided accordingly.

\begin{figure}[htbp]
  \centering
  \begin{tikzpicture}[
      every node/.style={font=\small},
      oldedge/.style={draw=black!70,line width=.85pt},
      newedge/.style={draw=black,line width=1.35pt},
      dual/.style={draw=black!55,dashed,line width=.8pt,->},
      crossingvertex/.style={
        circle,draw=black,fill=white,inner sep=1.1pt
      },
      facelabel/.style={font=\small\itshape},
      faceshade/.style={fill=black!6}
    ]

    \begin{scope}[xshift=-4.0cm]
      \fill[faceshade]
        (-1.25,.85)--(1.25,.85)--(0,0)--cycle;
      \fill[faceshade]
        (-1.25,-.85)--(1.25,-.85)--(0,0)--cycle;

      \draw[oldedge] (-1.25,.85)--(1.25,.85);
      \draw[oldedge] (-1.25,-.85)--(1.25,-.85);
      \draw[oldedge] (-1.25,.85)--(0,0)--(1.25,-.85);
      \draw[oldedge] (-1.25,-.85)--(0,0)--(1.25,.85);

      \node[vertexA,label=left:$a_1$] at (-1.25,.85) {};
      \node[vertexA,label=left:$a_2$] at (-1.25,-.85) {};
      \node[vertexB,label=right:$b_1$] at (1.25,.85) {};
      \node[vertexB,label=right:$b_2$] at (1.25,-.85) {};
      \node[crossingvertex] at (0,0) {};

      \node[vertexB,label=above:$x$] at (-0.2,.46) {};

      \coordinate (s1) at (.25,.75);
      \coordinate (s2) at (.35,-.04);

      \node[above=1pt] at (s1) {$s_1$};
      \node[below right=0pt] at (s2) {$s_2$};

      \node[facelabel] (F0) at (.15,.4) {$F_0$};
      \node[facelabel] (F1) at (1.75,1.35) {$F_1$};
      \node[facelabel] (F2) at (-.15,-.58) {$F_2$};

      \draw[dual] (F0) .. controls (0.45,1) and (0.55, 1.2) .. (F1);
      \draw[dual] (F1) .. controls (2.25,1) and (2,-0.2)  .. (F2);

      \node at (0,-1.48) {(a) a route in the face-dual};
    \end{scope}

    \begin{scope}[xshift=4.0cm]
      \coordinate (q1) at (.3,.85);
      \coordinate (q2) at (.59,-.4);

      \draw[oldedge] (-1.25,.85)--(q1)--(1.25,.85);
      \draw[oldedge] (-1.25,-.85)--(1.25,-.85);
      \draw[oldedge] (-1.25,.85)--(0,0)--(q2)--(1.25,-.85);
      \draw[oldedge] (-1.25,-.85)--(0,0)--(1.25,.85);

      \node[vertexA,label=left:$a_1$] at (-1.25,.85) {};
      \node[vertexA,label=left:$a_2$] at (-1.25,-.85) {};
      \node[vertexB,label=right:$b_1$] at (1.25,.85) {};
      \node[vertexB,label=right:$b_2$] at (1.25,-.85) {};
      \node[crossingvertex] at (0,0) {};

      \node[vertexB,label=above:$x$] at (-0.2,.46) {};

      \draw[newedge]
        (-0.2,.46) .. controls (2,2.5) and (4,0) .. (-1.25,-.85);

      \node[crossingvertex,label=above:$q_1$] at (q1) {};
      \node[crossingvertex,label=below:$q_2$] at (q2) {};

      \node at (0,-1.48) {(b) the corresponding edge insertion};
    \end{scope}
  \end{tikzpicture}
  \caption{Routing a new edge through the planarization.  In (a), the dashed
  arrows show the face-dual route $F_0,s_1,F_1,s_2,F_2$.  In (b), the edge
  $xa_2$ is drawn along this route.  Crossing $s_1$ and $s_2$ creates the new
  crossing vertices $q_1$ and $q_2$ and subdivides the corresponding
  elementary segments.}
  \label{fig:edge-routing}
\end{figure}

We enumerate all such routes subject to the conditions imposed by goodness.
The new edge may not cross an old edge incident with either of its
endvertices, and it may cross each old edge at most once.  Thus the route may
not use an elementary segment of an adjacent edge, and it may use at most one
elementary segment belonging to any fixed old edge.  The route crosses only
the interiors of elementary segments and never passes through a vertex of the
planarization.  The sequence of faces itself need not be simple: a route may
leave a face and later return to it.

A route in the face-dual records which elementary segments are crossed and in
what order, but this information alone does not always determine an embedding.
For each candidate route we therefore insert the new crossing vertices into
the corresponding boundary walks and split the traversed faces successively
along the new edge.  When a face is visited more than once, a later split must
be compatible with the splits already made in that face.  We discard the
route if no compatible collection of boundary walks remains, and retain every
compatible outcome otherwise.  In particular, the construction never relies
on a purely formal list of crossings: every retained extension is represented
by an actual planarization.

Figure~\ref{fig:worked-k24} shows one branch of the recursion from $K_{2,2}$
to $K_{2,4}$.  Starting from the crossed drawing of $K_{2,2}$, we place a new
blue vertex $b_3$ in a face and insert its two incident edges.  The second of
these crosses $a_1b_1$, creating the new crossing vertex $p$ and subdividing
that elementary segment.  We then place $b_4$ in a face of the resulting
drawing of $K_{2,3}$ and insert its two incident edges.  In the final
insertion, $a_1b_4$ crosses both the previously added edge $a_2b_3$ and the
old edge $a_2b_2$.  This example is only one possible branch of the
construction; all admissible choices are considered by the program.

\begin{figure}[htbp]
\centering
\begin{tikzpicture}[
    scale=.86,
    every node/.style={font=\small},
    oldedge/.style={draw=black!60,line width=.8pt},
    newedge/.style={draw=black,line width=1.35pt},
    crossingvertex/.style={
      circle,draw=black,fill=white,inner sep=1.05pt
    },
    faceshade/.style={fill=black!6},
    panellabel/.style={font=\small,align=center}
  ]

\begin{scope}[xshift=-6.9cm,yshift=2.45cm]
  \draw[oldedge] (-1.25,.85)--(1.25,.85);
  \draw[oldedge] (-1.25,-.85)--(1.25,-.85);
  \draw[oldedge] (-1.25,.85)--(0,0)--(1.25,-.85);
  \draw[oldedge] (-1.25,-.85)--(0,0)--(1.25,.85);

  \node[vertexA,label=below:$a_1$] at (-1.25,.85) {};
  \node[vertexA,label=above:$a_2$] at (-1.25,-.85) {};
  \node[vertexB,label=below:$b_1$] at (1.25,.85) {};
  \node[vertexB,label=above:$b_2$] at (1.25,-.85) {};
  \node[crossingvertex] at (0,0) {};

  \node[panellabel] at (0,-1.35) {(a) $K_{2,2}$};
\end{scope}

\begin{scope}[xshift=-2.3cm,yshift=2.45cm]
  \fill[faceshade]
    (-1.25,.85)--(1.25,.85)--(0,0)--cycle;

  \draw[oldedge] (-1.25,.85)--(1.25,.85);
  \draw[oldedge] (-1.25,-.85)--(1.25,-.85);
  \draw[oldedge] (-1.25,.85)--(0,0)--(1.25,-.85);
  \draw[oldedge] (-1.25,-.85)--(0,0)--(1.25,.85);

  \node[vertexA,label=below:$a_1$] at (-1.25,.85) {};
  \node[vertexA,label=above:$a_2$] at (-1.25,-.85) {};
  \node[vertexB,label=below:$b_1$] at (1.25,.85) {};
  \node[vertexB,label=above:$b_2$] at (1.25,-.85) {};
  \node[crossingvertex] at (0,0) {};

  \node[vertexB,label=right:$b_3$] at (-0.2,.46) {};

  \node[panellabel] at (0,-1.35) {(b) place $b_3$};
\end{scope}

\begin{scope}[xshift=2.3cm,yshift=2.45cm]
  \draw[oldedge] (-1.25,.85)--(1.25,.85);
  \draw[oldedge] (-1.25,-.85)--(1.25,-.85);
  \draw[oldedge] (-1.25,.85)--(0,0)--(1.25,-.85);
  \draw[oldedge] (-1.25,-.85)--(0,0)--(1.25,.85);

  \node[vertexA,label=below:$a_1$] at (-1.25,.85) {};
  \node[vertexA,label=above:$a_2$] at (-1.25,-.85) {};
  \node[vertexB,label=below:$b_1$] at (1.25,.85) {};
  \node[vertexB,label=above:$b_2$] at (1.25,-.85) {};
  \node[crossingvertex] at (0,0) {};
  \node[vertexB,label=right:$b_3$] at (-0.2,.46) {};

  \draw[newedge] (-1.25,.85)--(-0.2,.46);

  \node[panellabel] at (0,-1.35) {(c) add $a_1b_3$};
\end{scope}

\begin{scope}[xshift=6.9cm,yshift=2.45cm]
  \coordinate (p) at (-0.2,.85);

  \draw[oldedge] (-1.25,.85)--(p)--(1.25,.85);
  \draw[oldedge] (-1.25,-.85)--(1.25,-.85);
  \draw[oldedge] (-1.25,.85)--(0,0)--(1.25,-.85);
  \draw[oldedge] (-1.25,-.85)--(0,0)--(1.25,.85);
  \draw[oldedge] (-1.25,.85)--(-0.2,.46);

  \draw[newedge]
    (-1.25,-.85)--(-1.78,-.55)--(-1.78,1.1)
    --(-0.2,1.1)--(p)--(-0.2,.46);

  \node[vertexA,label=below:$a_1$] at (-1.25,.85) {};
  \node[vertexA,label=above:$a_2$] at (-1.25,-.85) {};
  \node[vertexB,label=below:$b_1$] at (1.25,.85) {};
  \node[vertexB,label=above:$b_2$] at (1.25,-.85) {};
  \node[vertexB,label=right:$b_3$] at (-0.2,.46) {};

  \node[crossingvertex] at (0,0) {};
  \node[crossingvertex,label=above right:$p$] at (p) {};

  \node[panellabel] at (0,-1.35) {(d) add $a_2b_3$};
\end{scope}

\begin{scope}[xshift=-5.6cm,yshift=-1.45cm]
  \coordinate (p) at (-0.2,.85);

  \fill[faceshade]
    (-1.25,-.85)--(1.25,-.85)--(0,0)--cycle;

  \draw[oldedge] (-1.25,.85)--(p)--(1.25,.85);
  \draw[oldedge] (-1.25,-.85)--(1.25,-.85);
  \draw[oldedge] (-1.25,.85)--(0,0)--(1.25,-.85);
  \draw[oldedge] (-1.25,-.85)--(0,0)--(1.25,.85);

  \draw[oldedge] (-1.25,.85)--(-0.2,.46);
  \draw[oldedge]
    (-1.25,-.85)--(-1.78,-.55)--(-1.78,1.1)
    --(-0.2,1.1)--(p)--(-0.2,.46);

  \node[vertexA,label=below:$a_1$] at (-1.25,.85) {};
  \node[vertexA,label=above:$a_2$] at (-1.25,-.85) {};
  \node[vertexB,label=below:$b_1$] at (1.25,.85) {};
  \node[vertexB,label=above:$b_2$] at (1.25,-.85) {};
  \node[vertexB,label=right:$b_3$] at (-0.2,.46) {};
  \node[vertexB,label=right:$b_4$] at (-0.2,-.46) {};

  \node[crossingvertex] at (0,0) {};
  \node[crossingvertex] at (p) {};

  \node[panellabel] at (0,-1.5) {(e) place $b_4$};
\end{scope}

\begin{scope}[xshift=0cm,yshift=-1.45cm]
  \coordinate (p) at (-0.2,.85);

  \draw[oldedge] (-1.25,.85)--(p)--(1.25,.85);
  \draw[oldedge] (-1.25,-.85)--(1.25,-.85);
  \draw[oldedge] (-1.25,.85)--(0,0)--(1.25,-.85);
  \draw[oldedge] (-1.25,-.85)--(0,0)--(1.25,.85);

  \draw[oldedge] (-1.25,.85)--(-0.2,.46);
  \draw[oldedge]
    (-1.25,-.85)--(-1.78,-.55)--(-1.78,1.1)
    --(-0.2,1.1)--(p)--(-0.2,.46);

  \draw[newedge] (-1.25,-.85)--(-0.2,-.46);

  \node[vertexA,label=below:$a_1$] at (-1.25,.85) {};
  \node[vertexA,label=above:$a_2$] at (-1.25,-.85) {};
  \node[vertexB,label=below:$b_1$] at (1.25,.85) {};
  \node[vertexB,label=above:$b_2$] at (1.25,-.85) {};
  \node[vertexB,label=right:$b_3$] at (-0.2,.46) {};
  \node[vertexB,label=right:$b_4$] at (-0.2,-.46) {};

  \node[crossingvertex] at (0,0) {};
  \node[crossingvertex] at (p) {};

  \node[panellabel] at (0,-1.5) {(f) add $a_2b_4$};
\end{scope}

\begin{scope}[xshift=5.6cm,yshift=-1.45cm]
  \coordinate (p) at (-0.2,.85);
  \coordinate (r) at (-1.09375,1.1);
  \coordinate (q) at (.09,-.85);

  \draw[oldedge] (-1.25,.85)--(p)--(1.25,.85);
  \draw[oldedge] (-1.25,-.85)--(q)--(1.25,-.85);
  \draw[oldedge] (-1.25,.85)--(0,0)--(1.25,-.85);
  \draw[oldedge] (-1.25,-.85)--(0,0)--(1.25,.85);

  \draw[oldedge] (-1.25,.85)--(-0.2,.46);

  \draw[oldedge]
    (-1.25,-.85)--(-1.78,-.55)--(-1.78,1.1)
    --(r)--(-0.2,1.1)--(p)--(-0.2,.46);

  \draw[oldedge] (-1.25,-.85)--(-0.2,-.46);

  \draw[newedge]
    (-1.25,.85)--(r)--(-1.0,1.25)--(1.8,1.25)
    --(1.8,-1.08)--(.26,-1.08)--(q)--(-0.2,-.46);

  \node[vertexA,label=below:$a_1$] at (-1.25,.85) {};
  \node[vertexA,label=above:$a_2$] at (-1.25,-.85) {};
  \node[vertexB,label=below:$b_1$] at (1.25,.85) {};
  \node[vertexB,label=above:$b_2$] at (1.25,-.85) {};
  \node[vertexB,label=right:$b_3$] at (-0.2,.46) {};
  \node[vertexB,label=right:$b_4$] at (-0.2,-.46) {};

  \node[crossingvertex] at (0,0) {};
  \node[crossingvertex] at (p) {};
  \node[crossingvertex,label=above:$r$] at (r) {};
  \node[crossingvertex,label=below:$q$] at (q) {};

  \node[panellabel] at (0,-1.5)
    {(g) add $a_1b_4$: $K_{2,4}$};
\end{scope}

\end{tikzpicture}

\caption{One branch of the recursive construction.  In (a)--(d), a new blue
vertex $b_3$ is placed in a face of a drawing of $K_{2,2}$ and its two
incident edges are inserted successively.  The edge $a_2b_3$ crosses
$a_1b_1$ at $p$, so the crossed elementary segment is subdivided there.  In
(e)--(g), the same procedure is repeated with a new blue vertex $b_4$.  The
last edge $a_1b_4$ crosses $a_2b_3$ at $r$ and $a_2b_2$ at $q$.  Thus the
last panel is one of the drawings of $K_{2,4}$ obtained by the recursion.}
\label{fig:worked-k24}
\end{figure}

To pass from $\K_{r,b}$ to $\K_{r+1,b}$, we introduce a new red vertex and
then add its $b$ incident edges one at a time by the procedure above.  The
construction of $\K_{r,b+1}$ is symmetric.  After all edges incident with the
new vertex have been inserted, we retain one representative from each
homeomorphism class on the sphere before proceeding to the next stage.

The construction is exhaustive.  Let $D'$ be any good drawing at the next
stage, and delete the edges incident with the last-added vertex in reverse
order.  Immediately before an edge $xy$ is deleted, its interior passes
through a sequence of faces of the planarization of the remaining drawing.
Since $D'$ is good, this route crosses no edge incident with $x$ or $y$ and
crosses every other old edge at most once.  It is therefore among the routes
considered by the algorithm.  Moreover, deleting $xy$ reverses exactly the
successive subdivisions of elementary segments and facial boundary walks
performed during insertion, so the corresponding compatibility test
succeeds.  Repeating this for all edges incident with the deleted vertex shows
that every drawing at the next stage arises from one at the preceding stage.
Induction from the two drawings of $K_{2,2}$ therefore shows that the
procedure produces every good drawing required for our calculations.

Finally, each generated drawing is converted to the combinatorial structure
used in the relevant flag algebra calculation.  Under (C1), for example, we
retain for every pair of edges whether or not they cross; under (C2), we
retain only which sets of two red and two blue vertices induce a crossing.
The resulting colored structures are then reduced up to isomorphism.  Since
this step forgets some of the topological information in the drawing, several
distinct homeomorphism classes may give the same configuration.  The
corresponding partially labeled structures are the flags used in the flag
algebra calculation.

The number of isomorphism classes for some small graphs for some kinds of isomorphism 
are summarized in Table~\ref{tab:class-counts}. All calculations in Table~\ref{tab:class-counts} were made on Iowa State University's Nova cluster; excluding $K_{3,6}$, these calculations took a cumulative 8.6 CPU-days, about 16 GB of memory, and about 20 GB of storage space. The memory is primarily due to parallel processing; the calculations are embarrassingly parallel, so the whole table excluding $K_{3,6}$ was created in about 8 hours of wall-time using 32 CPU cores. The $K_{3,6}$ calculation alone took 78.5 CPU-days, around 41 GB of memory due to parallel processing, and about 50 GB of storage; this was parallelized down to 30 wall-time hours using 64 CPU cores. This program for enumerating drawings of general graphs can be found at \url{https://github.com/nickLayman/Knn-drawing-generation}.

\begin{table}[htbp]
    \centering
    \resizebox{\textwidth}{!}{
    \begin{tabular}{|c|c|c|c|c|c|c|} \hline
        Graph & Drawings           & Drawings     & (C1) flags         & (C1) flags   & (C2) flags         & (C2) flags \\
              & (color preserving) & (colorblind) & (color preserving) & (colorblind) & (color preserving) & (colorblind) \\ \hline
        $K_{2,2}$ & 2 & 2 & 2 & 2 & 2 & 2 \\ \hline
        $K_{2,3}$ & 6 & - & 5 & - & 4 & - \\ \hline
        $K_{2,4}$ & 40 & - & 21 & - & 11 & - \\ \hline
        $K_{2,5}$ & 394 & - & 144 & - & 33 & - \\ \hline
        $K_{2,6}$ & 5,457 & - & 1,697 & - & 144 & - \\ \hline
        $K_{3,3}$ & 177 & 102 & 112 & 70 & 18 & 13 \\ \hline
        $K_{3,4}$ & 14,823 & - & 6,393 & - & 355 & - \\ \hline
        $K_{3,5}$ & 2,163,813 & - & 468,742 & - & 17,643 & - \\ \hline
        $K_{3,6}$ & 522,392,504 & - & 40,572,486 & - & 1,448,110 & - \\ \hline
        $K_{4,4}$ & 18,031,850 & 9,019,729 & 1,931,529 & 967,452 & 123,926 & 62,415 \\ \hline
    \end{tabular}
    }
    \caption{The cardinality of equivalence classes for various equivalence relations on complete bipartite graphs with at most 8 vertices, and $K_{3,6}$. We exclude any case with a part of size 1; all good drawings of $K_{1,m}$ must be planar and thus there is only one up to isomorphism in any of the above senses. For balanced $K_{m,m}$, one may consider whether swapping the color parts ``should" count as the same drawing or the same flag. For the curious reader, we have included those numbers as well and refer to this case as colorblind. This only changes the cases where the parts are equal size so we exclude the others for clarity.}
    \label{tab:class-counts}
\end{table}

\section{General Drawings}\label{sec:knn}
In this section we describe the proof of Theorem~\ref{thm:Knn}.
The main step is the following lemma which we prove using flag algebras.

\begin{lemma}\label{lem:knnflag44}
Let $n$ be sufficiently large and $K^D$ be any good drawing of $K_{n,n}$.
The proportion of induced $K_{2,2}$ subdrawings of $K^D$ that induce a crossing is at least $\crKnnLemmaCertified$.
\end{lemma}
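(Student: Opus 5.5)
We will prove Lemma~\ref{lem:knnflag44} by the flag algebra semidefinite method of Section~\ref{sec:fa}. The plan is to show that every $\phi\in\Hom$ satisfies $\phi(\kappa)\ge \crKnnLemmaCertified$, where $\kappa\in\mathcal{F}_{2,2}$ denotes the crossed drawing of $K_{2,2}$ (Figure~\ref{fig:k22-initial}(b)). The finite-$n$ statement then follows from the remark at the end of Section~\ref{sec:fa}: since only finitely many multiplications are used, the limit inequality transfers to $K^D_{n,n}$ at the cost of an additive $O(1/n)$, which is absorbed because $\crKnnLemmaCertified$ is slightly below the certified rational value.

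\textbf{Choice of configurations.} We first fix the isomorphism notion. We will use (C2), recording only which red--red--blue--blue quadruples induce a crossing, because it gives by far the smallest families (Table~\ref{tab:class-counts}). We then work in $\mathcal{F}_{r,b}$ with $(r,b)=(3,6)$ or $(4,4)$, where there are $1{,}448{,}110$ and $123{,}926$ configurations respectively. With the solver of~\cite{lpsdp2026} the larger family $\mathcal{F}_{3,6}$ is feasible.

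\textbf{Rewriting the target.} Using the identity $\kappa=\sum_{F\in\mathcal{F}_{r,b}}P(\kappa,F)F$ in $\mathcal{A}$, the objective becomes $\phi\bigl(\sum_F P(\kappa,F)F\bigr)$, which plays the role of $\ucr{F}$ in \eqref{eq:C} after normalizing by $\binom{r}{2}\binom{b}{2}$.

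\textbf{Types and the SDP.} Next we choose the set $T$ of types. These are all labeled (C2)-configurations $\sigma$ on $|V_r(\sigma)|,|V_b(\sigma)|$ such that $2r'-|V_r(\sigma)|=r$ and $2b'-|V_b(\sigma)|=b$. For $(3,6)$ this means, for instance, types with one red and two or four blue labeled vertices, or three red and two or four blue. We enumerate $\mathcal{F}^\sigma_{r',b'}$ from the generated drawings, compute the unlabeled products $\llbracket F_iF_j\rrbracket_\sigma$ expressed in $\mathcal{F}_{r,b}$, and solve the SDP \eqref{eq:sdp} numerically. The red/blue swap symmetry, together with any linear constraints known to be valid (for example, the known value $Z(2,n)=0$ needs nothing, but Kleitman's exact value $\ucr{K_{5,n}}=Z(5,n)$ gives valid inequalities when $r\ge 5$), may be added with multipliers $y\ge 0$ if they help.

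\textbf{Rounding and certification.} Finally we round the numerical matrices $M_\sigma$ to exact rational positive semidefinite matrices. The approach is to project onto the near-kernel determined by the tight configurations, that is, those $F$ attaining the minimum with $\phi(F)>0$ in a plausible extremal construction. We then verify in SageMath that $\min_F c_F\ge\crKnnLemmaCertified$. The main obstacle is this computational step. The SDP on over a million configurations is at the edge of solver capacity, and the numerical optimum must leave enough slack, since the conjectured value is $1/4=0.25$ and we are claiming $0.24234$, for rounding to succeed. If the rounding fails, I would first shrink $T$ to the types carrying most of the dual weight and retry with a safety margin. As a fallback I would drop to $(4,4)$ and accept a weaker constant.
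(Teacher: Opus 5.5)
Your framework is the paper's: a flag-algebra SDP lower bound on $\phi(\kappa)$, rational rounding, and transfer to finite $n$ at $O(1/n)$ cost. But for this lemma the SDP instance \emph{is} the proof. The constant $\crKnnLemmaCertified$ is simply what one certified computation produced, and you have replaced that computation with a different, coarser one. The paper works with isomorphism (C1) on $K_{4,4}$, meaning configurations that record which \emph{pairs of edges} cross. That gives all $1{,}931{,}529$ configurations, cut to $967{,}452$ constraints by the red/blue-swap symmetry reduction of Kiem, Pokutta and Spiegel, and the instance took about 3{,}233 hours to solve. You instead use (C2). For each crossed $K_{2,2}$, (C2) forgets which of its two pairs of independent edges is the crossing pair. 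That is exactly the information the paper chose to keep, consistent with its remark that retaining more information typically gives stronger bounds.

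The gap is that nothing in your proposal shows that (C2) on $K_{3,6}$ certifies $\crKnnLemmaCertified$. The paper enumerates that family (Table~\ref{tab:class-counts}), but its proof of the lemma does not use it. Also, $(3,6)$ is unbalanced, so the color-swap reduction that made the paper's instance tractable is not available to you.

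Your fallback cannot rescue the statement either. (C2) is a coarsening of (C1), so every (C2) certificate on $K_{4,4}$ lifts to a (C1) certificate on $K_{4,4}$. It can therefore at best match the paper's bound, and you yourself concede it would give a weaker constant. For scale, the earlier conference version only reached $0.9118\cdot Z(n,n)$, i.e.\ crossing density about $0.228$, so the constant is sensitive to the setup.

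A smaller point concerns your rounding plan. Since $\crKnnLemmaCertified<1/4$, the bound is not tight, and a Zarankiewicz-type construction is not an extremal point of the SDP. There are no tight configurations whose flag vectors should lie in the kernels of the $M_\sigma$, so projecting onto them is unjustified. Rounding here just consumes the slack below the numerical optimum.

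As written, your proposal is a plan for an experiment with unknown outcome rather than a proof of the stated constant. What is missing is the (C1) information on $K_{4,4}$, with the color-swap reduction, solved and rounded to a certificate exceeding $\crKnnLemmaCertified$.
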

\begin{proof}
We used isomorphism (C1).  
We obtained all drawings of $K_{4,4}$ up to isomorphisms under (C1), denoted here by $\mathcal{F}_{4,4}$, using a program described in Section~\ref{sec:generate}.
In this setting, $|\mathcal{F}_{4,4}| = 1,931,529$.
We used symmetry reduction developed by Kiem, Pokutta and  Spiegel~\cite{Kiem2026} to reduce the number of constraints in the resulting semidefinite program to 967,452.
Denote by $C$ the crossed drawing of $K_{2,2}$.
Using flag algebras, we obtained after 3,233 hours of computation that for every $\phi \in \Hom$: 
\[
\phi(C) = 
\phi\left(\sum_{\K \in \mathcal{F}_{4,4}} P(C,\K) \cdot\K\right) > 
\crKnnLemmaCertified.
\] 
\end{proof}

\begin{proof}[Proof of Theorem~\ref{thm:Knn}]
Let $K_{n,n}^D$ be a drawing of $K_{n,n}$.

\begin{align*}
 \ucr{K_{n,n}^D} >  \crKnnLemmaCertified \binom{n}{2}\binom{n}{2} 
 \               >  4\cdot \crKnnLemmaCertified  \cdot n^4/16 + o(n^4) > \crKnnCertified \cdot Z(n,n) + o(n^4).
\end{align*}
\end{proof}

\section{Rectilinear Drawings}\label{sec:rectinilear}
In this section we describe the proof of Theorem~\ref{thm:rKnn}.

\begin{lemma}\label{lem:knnflag_rectilinear}
For every sufficiently large $n$, in every rectilinear drawing of $K_{n,n}$ the expected number of crossings in a $K_{4,4}$ chosen uniformly at random is more than 8.8837.
\end{lemma}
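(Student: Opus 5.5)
This is a flag algebra computation in the framework of Section~\ref{sec:fa}, with $\mathcal{F}_{4,4}$ now taken to be all \emph{rectilinear} drawings of $K_{4,4}$ up to an isomorphism that keeps rectilinear information. Concretely, I would use the order type, i.e.\ (C4): for every ordered triple of the eight points, its orientation, together with the red/blue coloring. The order type determines (C1)--(C3) and (C5), so it is the strongest natural choice. It is also finite and computable: it suffices to enumerate all realizable bichromatic order types of $4+4$ points in general position.

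\textbf{Step 1: enumeration.} I would generate the configurations from the Aichholzer--Krasser database of order types of $8$ points. For each order type I would consider all $\binom{8}{4}$ ways to color four points red, reduce up to color-preserving isomorphism, and record the number of crossings $\ucr{F}$ in the induced drawing of $K_{4,4}$. If this set is too large for the solver, I would instead coarsen to (C1) restricted to configurations that are rectilinearly realizable.

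\textbf{Step 2: the semidefinite program.} I would choose types $\sigma$ with $2r'-|V_r(\sigma)|=4$ and $2b'-|V_b(\sigma)|=4$. The natural options are
\begin{itemize}
\item types on $2$ red and $2$ blue vertices, with flags on $3+3$ vertices;
\item types on $4$ red and $0$ blue vertices, with flags on $4+2$ vertices, and the color-symmetric choice;
\item unbalanced types, for example $2$ red and $0$ blue.
\end{itemize}
I would then solve \eqref{eq:sdp} for the objective $\sum_{F\in\mathcal{F}_{4,4}}\ucr{F}\,F$, using the symmetry under swapping the colors to halve the program. The extra geometric constraints of the rectilinear setting enter automatically, because only realizable configurations appear in $\mathcal{F}_{4,4}$ and in the flag lists.

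\textbf{Step 3: certification.} The numerical optimum should be slightly above $8.8837$. I would round the matrices $M_\sigma$ to rational positive semidefinite matrices, for example by projecting onto the numerically observed kernel and then rounding, as in \cite{Balogh2014,Baber2011}. Then I would verify exactly that $\min_F c_F>8.8837$. This gives $\phi\bigl(\sum_F \ucr{F}F\bigr)>8.8837$ for every $\phi\in\Hom$ arising from rectilinear sequences. The $O(1/n)$ transfer remark then yields the statement for all sufficiently large $n$, since the left-hand side is exactly the expected number of crossings in a uniformly random $K_{4,4}$.

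\textbf{Main obstacle.} I expect the hardest part to be Steps 1 and 2 together: the number of bichromatic order types of $8$ points may reach millions, which gives a very large semidefinite program. I would first try the full order-type isomorphism with the solver of \cite{lpsdp2026}. If that is too slow, I would drop to (C1)-level information over realizable configurations, accepting a weaker bound, and check whether it still exceeds $8.8837$. As a sanity check, the target implies $\rcr{K_{n,n}}\geq 8.8837\binom{n}{4}^2/\binom{4}{2}^2\cdot\ldots$, which rescales to $0.987\,n^4/16$ consistently with Theorem~\ref{thm:rKnn}. Against this, Zarankiewicz's drawing has expected value $9$ for $K_{4,4}$, so the bound $8.8837$ is plausible.
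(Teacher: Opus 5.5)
Your proposal matches the paper's proof: both use the order-type isomorphism (C4), generate the $4+4$ configurations by taking all splits of Aichholzer's order types of $8$ points, and solve and round the resulting flag-algebra SDP (the paper reports $112401$ configurations after symmetry reduction and a certified bound of about $8.88378$). One minor slip does not affect the argument: in your sanity check, $\binom{n}{4}^2/\binom{4}{2}^2$ should read $\binom{n}{2}^2/\binom{4}{2}^2$, equivalently $\binom{n}{4}^2/\binom{n-2}{2}^2$, which gives $(8.8837/9)\, n^4/16$.
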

\begin{proof}
We used isomorphism (C4). To obtain rectilinear drawings of $K_{4,4}$, we start with all rectilinear drawings of $K_8$ found by Aichholzer~\cite{Aichholzer06} and try all possible splits into $K_{4,4}$. This lead to a program using $112401$ configurations after the same reduction as in Lemma~\ref{lem:knnflag44}.
Using flag algebras, we obtain
\[
\phi\left(\sum_{\K \in \mathcal{F}_{4,4}}\overline{cr}(\K) \K\right) \geq 
\frac{8883777481283804668150619490360970766373630}{1000000000000000000000000000000000000000000}  \approx  8.883777481283804.
\]
\end{proof}

\begin{proof}[Proof of Theorem~\ref{thm:rKnn}]
Let $K_{n,n}^D$ be a rectilinear drawing of $K_{n,n}$.
By double-counting the pairs of crossings and copies of $K_{4,4}$ in $K_{n,n}^D$ containing them  and using Lemma~\ref{lem:knnflag_rectilinear} we get
\begin{align*}
\left(\frac14n^4+o(n^4)\right) \overline{cr}(K_{n,n}^D) &> 8.8837/24^2 n^8 + o(n^8)\\
 \overline{cr}(K_{n,n}^D) &>   8.8837/9  \cdot n^4/16 + o(n^4)
 >  0.987  \cdot Z(n,n) + o(n^4).
\end{align*}
\end{proof}

\section{Tur\'an-Type Restrictions}\label{sec:turan}

In this section we prove Theorem~\ref{thm:tKnn}. It is enough to work with (C2), translating the problem to a 4-uniform hypergraph setting.

\begin{lemma}\label{lem:knnflag_turan}
For sufficiently large $n$, in every drawing of $K_{n,n}$ that does not contain an induced $K_{3,4}$ subdrawing with exactly 2 crossings sharing one vertex, the expected number of crossings in a random $K_{3,4}$ is at least $4.5 - o(1)$.
\end{lemma}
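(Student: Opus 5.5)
We will prove Lemma~\ref{lem:knnflag_turan} by a flag algebra computation in the setting of isomorphism (C2), working directly in $\mathcal{F}_{3,4}$. Under (C2) a drawing of $K_{r,b}$ is recorded only as the set of quadruples $\{u_1,u_2,v_1,v_2\}$ (two red, two blue) that induce a crossing. So each configuration is a 4-uniform ``crossing hypergraph'' on a 2-colored vertex set. The forbidden subdrawing $F$ is then simply a (C2)-configuration on $K_{3,4}$: exactly two crossing quadruples, whose vertex sets intersect in exactly one vertex.

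Let $\mathcal{F}'_{3,4}\subseteq\mathcal{F}_{3,4}$ be the 355 (C2)-classes of drawings of $K_{3,4}$ from Table~\ref{tab:class-counts}, with the class of $F$ (and any class isomorphic to it) removed. In any convergent sequence of $F$-free drawings the limit satisfies $\phi(F)=0$. We therefore work in the quotient by $F$, i.e. all sums range over $\mathcal{F}'_{3,4}$. Possibly we will also use the $F$-free (C2)-classes of $K_{4,4}$ or $K_{3,5}$ if $K_{3,4}$ alone is too weak. The target is
\[
\phi\Big(\sum_{\K\in\mathcal{F}'_{3,4}} \ucr{\K}\,\K\Big)\ \ge\ 4.5 .
\]
Here $\ucr{\K}$ is the number of crossing quadruples in the configuration, which is well defined under (C2).

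The steps are as follows.
\begin{enumerate}
\item Choose types $\sigma$ with $2r'-|V_r(\sigma)|=3$ and $2b'-|V_b(\sigma)|=4$. Natural choices are $\sigma$ on one red and two blue vertices (flags on $K_{2,3}$), $\sigma$ on three red and zero blue vertices, and $\sigma$ on one red and zero blue vertices (flags on $K_{2,2}$).
\item Solve the SDP \eqref{eq:sdp} restricted to $F$-free configurations.
\item Since the claimed bound $4.5$ is exactly the Zarankiewicz value, round the solution to an exact rational certificate. Here $4.5$ is the expected number of crossings in a random $K_{3,4}$ of a Zarankiewicz drawing: $\binom32\binom42\cdot\frac14=18\cdot\frac14=4.5$.
\end{enumerate}

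Tightness is what makes step 3 delicate. In the Zarankiewicz construction (which is $F$-free), every configuration with positive limit density must have slack zero, and the product-of-flags vectors evaluated on that construction must lie in the kernels of the $M_\sigma$. We will compute the limit $\phi_Z$ of Zarankiewicz drawings explicitly: red and blue vertices are split into halves on the two axes, and a quadruple crosses iff its two red vertices lie on the same side and its two blue vertices lie on the same side. We will also compute, for each type, the flag densities in $\phi_Z$ rooted at typical embeddings. This forces prescribed zero eigenvectors, and we will project the numerical $M_\sigma$ onto the orthogonal complement of these vectors before rounding, following \cite{Balogh2014,Baber2011}. Configurations not appearing in $\phi_Z$ should get strictly positive slack and absorb the rounding error.

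Finally, the $o(1)$ term and the passage to finite $n$ come from the standard $O(1/n)$ error in multiplication, as explained in Section~\ref{sec:fa}. Theorem~\ref{thm:tKnn} then follows by double counting, as in the proof of Theorem~\ref{thm:rKnn}: each crossing lies in $(n-2)\binom{n-2}{2}$ copies of $K_{3,4}$, giving $\ucr{K^D_{n,n}}\ge 4.5\binom n3\binom n4/\big((n-2)\binom{n-2}{2}\big)-o(n^4)=n^4/16-o(n^4)$.

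The main obstacle is obtaining an exact bound at all. It is not clear that $K_{3,4}$ with these small types has enough power to reach exactly $4.5$. If the numerical optimum stays strictly below $4.5$, I would first move up to $F$-free (C2)-configurations of $K_{4,4}$ or $K_{3,5}$. The forbidden-$F$ condition should prune these heavily relative to the 123,926 and 17,643 classes, which keeps the SDP tractable. I would then redo the sharp rounding there, using the extremal construction to identify the kernel.
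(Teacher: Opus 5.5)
Your plan is essentially the paper's proof. The paper works under (C2), deletes the single class $F$ from the 355 (C2)-configurations of $K_{3,4}$ to obtain 354 $F$-free configurations, and a flag algebra SDP over these certifies the exact bound $9/2$. So $K_{3,4}$ already suffices, and your fallback to $K_{4,4}$ or $K_{3,5}$ is not needed.
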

\begin{proof}
We used isomorphism (C2) to work with  $\mathcal{F}_{3,4}$. 
We started with the drawings of $K_{3,4}$ under (C1) and applied (C2) to them. 
This gave 355 drawings up to isomorphism.
Removing the drawings with two crossings sharing exactly one vertex resulted in 
$|\mathcal{F}_{3,4}| = 354$
and a slightly different $\mathcal{A}$.
Using flag algebras, we obtained that for every $\phi \in \Hom$ 
\[
\phi\left(\sum_{\K \in \mathcal{F}_{3,4}}\ucr{\K} \K\right)
\geq \frac{9}{2} .
\]
\end{proof}

\begin{proof}[Proof of Theorem~\ref{thm:tKnn}]
 Let $K_{n,n}^D$ be a drawing of $K_{n,n}$ that does not contain an induced $K_{3,4}$ subdrawing with exactly 2 crossings sharing one vertex.
We proceed by double-counting the pairs of crossings and copies of $K_{3,4}$ in $K_{n,n}^D$ containing them.
Each copy of $K_{2,2}$ in $K_{n,n}^D$ is in $2(n-2)\binom{n-2}{2} = n^3+o(n^3)$ different copies of $K_{3,4}$ in $K_{n,n}^D$.
On the other hand, there are $2\binom{n}3\binom{n}{4}$ copies of $K_{3,4}$ in $K_{n,n}^D$ and by Lemma~\ref{lem:knnflag_turan} the average number of crossings per copy is at least 4.5. This gives
\begin{align*}
(n^3+o(n^3)) \ucr{K_{n,n}^D} &\geq 4.5/72 n^7 + o(n^7)\\
 \ucr{K_{n,n}^D} &\geq n^4/16 + o(n^4).
\end{align*}
\end{proof}

\section{Hill's conjecture}\label{sec:hill}

In this section we describe the proof of Theorem~\ref{thm:hill}.
We use the same set-up as Balogh, Lidick\'y and Salazar~\cite{BaloghHill2019}.
This set-up for flag algebras is significantly different from the other problems mentioned in this paper.
The drawings of $K_n$ are represented by rotation systems. 
That is, for each vertex we remember the clockwise order of the $n-1$ edges to the remaining vertices. 
If there is a drawing corresponding to a rotation system,  then it is called \emph{realizable}. 
The small realizable rotation systems are known~\cite{relizable}.

Denote the set of all realizable rotations systems for $n$ vertices by $\mathcal{E}_n$.
Balogh, Lidick\'y and Salazar~\cite{BaloghHill2019} used $\mathcal{E}_7$, where $|\mathcal{E}_7| = 11,556$.

\begin{lemma}\label{lem:fahill}
    For sufficiently large $n$, let $K_n^D$
    be a drawing of $K_n$. 
    The density of $K_4$ subgraphs that do not induce a crossing is at most $22.007/35$.
\end{lemma}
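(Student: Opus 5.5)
We plan to prove the lemma by a flag algebra computation in the rotation-system setting of Balogh, Lidick\'y and Salazar~\cite{BaloghHill2019}, but on a larger base. In this setting two drawings are isomorphic when their rotation systems agree up to relabeling and reflection. The crossing of a $K_4$ is determined by the rotation system restricted to its four vertices: a good drawing of $K_4$ has either $0$ or $1$ crossing, and which case occurs is read off from the $4$-vertex rotation system. Hence the density of uncrossed $K_4$'s is a linear combination of the flags in $\mathcal{E}_n$. Let $N$ denote the uncrossed drawing of $K_4$, and let $P(N,R)$ be the fraction of $4$-subsets of $R\in\mathcal{E}_m$ inducing no crossing. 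The goal is to show that for every $\phi\in\text{Hom}^+(\mathcal{A},\mathbb{R})$ we have
\[
\phi\Big(\sum_{R\in\mathcal{E}_m} P(N,R)\,R\Big)\le \tfrac{22.007}{35}.
\]

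The first step is to enumerate the realizable rotation systems on $m$ vertices, taking them from the known lists~\cite{relizable}. We would use $m=8$ if that is computationally feasible; otherwise we stay at $m=7$ and strengthen the program in other ways, for example with more types or with extra linear constraints valid for all $\phi$. We then choose types $\sigma$, which are labeled realizable rotation systems on $|\sigma|$ vertices with $|\sigma|\equiv m \pmod 2$. For each type we take flags on $(m+|\sigma|)/2$ vertices and form the sums of squares $\llbracket (F^\sigma)^T M_\sigma F^\sigma\rrbracket_\sigma$ as in \eqref{eq:M}. The SDP \eqref{eq:sdp} is set up with the objective negated, so that we minimize $\max_R \big(P(N,R)+\sum_\sigma c_R(M_\sigma,\sigma)\big)$. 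We would solve it with the solver of~\cite{lpsdp2026} and use the symmetry reduction of~\cite{Kiem2026} to shrink the problem.

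Next we round the numerical solution to exact rational positive semidefinite matrices, using the SageMath rounding procedure~\cite{Balogh2014,Baber2011}. This yields a certificate that every coefficient is at most $22.007/35$. Because the bound is stated with slack, namely $22.007$ rather than the true optimum, small rounding losses can be absorbed. The limit statement then transfers to sufficiently large finite $n$ with an $O(1/n)$ error, as explained at the end of Section~\ref{sec:fa}.

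The main obstacle is computational. $|\mathcal{E}_8|$ is far larger than $|\mathcal{E}_7|=11{,}556$, and both generating and verifying realizability of these rotation systems is costly. The SDP then has very large blocks, and rounding requires the numerical solution to be accurate enough that exact positive semidefiniteness survives. If $m=8$ turns out to be infeasible, we would fall back to $m=7$ with additional types, including all realizable types on $3$ and $5$ vertices. We would also add products of the known inequalities with nonnegative flags, hoping to gain the improvement from $0.985$ to $0.9899$.
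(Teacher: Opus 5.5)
Your proposal is correct and takes essentially the paper's approach: a flag-algebra SDP in the Balogh--Lidick\'y--Salazar rotation-system set-up on $\mathcal{E}_8$ ($5{,}370{,}725$ realizable rotation systems), solved with the lpsdp solver and rounded to an exact certificate. Only the $\mathcal{E}_8$ branch of your plan can be relied on. Your fallback to $m=7$ with all types is essentially the computation Balogh--Lidick\'y--Salazar already did, which gave only $0.985$, so it should not be expected to reach $22.007/35$.
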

\begin{proof}
    We used flag algebras with rotation systems on $\mathcal{E}_8$ that has size 5,370,725.
By using lpsdp~\cite{lpsdp2026}, we obtained the desired bound.    
\end{proof}

\begin{proof}[Proof of Theorem~\ref{thm:hill}]
Let $K_n^D$ be a drawing of $K_n$.
By using Lemma~\ref{lem:fahill},
the number of crossings in $K_n^D$ is at least
$
(1-22.007/35) \binom{n}{4}.
$
Hence
\begin{align*}    
\ucr{K_n} \geq  (1-22.007/35) \binom{n}{4} 
          = (1-22.007/35) \cdot \frac{64}{4!} H(n) + o(n^4)
          > 0.9899  H(n) + o(n^4).
\end{align*}
\end{proof}

\section{Conclusion}
Flag algebras are applicable to other variants of the crossing number.
For rectilinear drawings where the red and blue vertices are separated by a straight line we obtained the asymptotic lower bound $0.995 \cdot Z(n,n)$.
For 2-page drawings, in which there is a simple closed curve containing all the vertices but not crossing any edge, we obtained the lower bound $0.976 \cdot Z(n,n)$. Finally, for rectilinear separated 2-page drawings we obtained $0.99998 \cdot Z(n,n)$. This last bound is definitely of interest, since there exist rectilinear separated $2$-page drawings with exactly $Z(n,n)$ crossings.

\section*{Acknowledgements}
This work was developed over the course of  AIM SQuaRE Workshops ``Crossing numbers of complete and complete bipartite graphs'', which started in 2021.
Part of this work was published~\cite{Balogh2023conf} in proceedings of XII Latin-American Algorithms, Graphs and Optimization Symposium (LAGOS 2023).

The authors thank Carolina Medina and Sergey Norin
for fruitful discussions. 
This work used the computing resources at the Center for Computational Mathematics, University of Colorado Denver, including the Alderaan cluster, supported by the National Science Foundation award OAC-2019089.

The work of third and last authors was also partially supported by the MEYS of the Czech Republic under the INTER-EXCELLENCE II program (project No. LUAUS26294).

 \apptocmd{\thebibliography}{%
     \setlength{\itemsep}{0pt}%
     \setlength{\parsep}{0pt}%
 }{}{}

\bibliographystyle{plainurl}
\bibliography{references.bib}

@article{Gethner2016,
  doi = {10.1002/jgt.22041},
  URLX = {https://doi.org/10.1002/jgt.22041},
  year = {2016},
  nomonth = mar,
  publisher = {Wiley},
  volume = {84},
  number = {4},
  pages = {552--565},
  author = {Ellen Gethner and Leslie Hogben and Bernard Lidick{\'{y}} and Florian Pfender and Amanda Ruiz and Michael Young},
  title = {On Crossing Numbers of Complete Tripartite and Balanced Complete Multipartite Graphs},
  journal = {J. Graph Theory}
}

@incollection {Decline,
    AUTHOR = {Guy, Richard K.},
     TITLE = {The decline and fall of {Z}arankiewicz's theorem},
 BOOKTITLE = {Proof {T}echniques in {G}raph {T}heory ({P}roc. {S}econd {A}nn
              {A}rbor {G}raph {T}heory {C}onf., {A}nn {A}rbor, {M}ich.,
              1968)},
     PAGES = {63--69},
 PUBLISHER = {Academic Press, New York},
      YEAR = {1969},
}

@article{Woodall93,
	Author = {Woodall, D. R.},
	Coden = {JGTHDO},
	Doi = {10.1002/jgt.3190170602},
	Fjournal = {Journal of Graph Theory},
	Issn = {0364-9024},
	Journal = {J. Graph Theory},
	Number = {6},
	Pages = {657--671},
	Title = {Cyclic-order graphs and {Z}arankiewicz's crossing-number conjecture},
	Volume = {17},
	Year = {1993},
}

@article{BaloghHill2019,
  year = {2019},
  nomonth = jan,
  publisher = {Society for Industrial {\&} Applied Mathematics ({SIAM})},
  volume = {33},
  number = {3},
  pages = {1261--1276},
  author = {J{\'{o}}zsef Balogh and Bernard Lidick{\'{y}} and Gelasio Salazar},
  title = {Closing in on {H}ill{\textquotesingle}s Conjecture},
  journal = {{SIAM} Journal on Discrete Mathematics},
  doi={10.1137/17M115885}
}

@InProceedings{goaoc2018,
  author =	{Xavier Goaoc and Alfredo Hubard and R{\'e}mi de Joannis de Verclos and Jean-S{\'e}bastien Sereni and Jan Volec},
  title =	{{Limits of Order Types}},
  booktitle =	{31st International Symposium on Computational Geometry (SoCG 2015)},
  pages =	{300--314},
  series =	{Leibniz International Proceedings in Informatics (LIPIcs)},
  ISBN =	{978-3-939897-83-5},
  ISSN =	{1868-8969},
  year =	{2015},
  volume =	{34},
  editor =	{Lars Arge and J{\'a}nos Pach},
  publisher =	{Schloss Dagstuhl--Leibniz-Zentrum fuer Informatik},
  address =	{Dagstuhl, Germany},
  doi={10.48550/arXiv.1811.02236},
}

@article{Razborov2007,
  doi = {10.2178/jsl/1203350785},
  URLX = {https://doi.org/10.2178/jsl/1203350785},
  year = {2007},
  nomonth = dec,
  publisher = {Cambridge University Press ({CUP})},
  volume = {72},
  number = {4},
  pages = {1239--1282},
  author = {Alexander A. Razborov},
  title = {Flag algebras},
  journal = {J. Symb. Log.}
}

@misc{NorinZwols,
author= {Sergey Norin and Yori Zwols},
title = {Tur\'an's {B}rickyard problem and flag algebras},
note = {Presentation at the BIRS Workshop on geometric and topological graph theory (13w5091), presented by S. Norin},
year = {October 1, 2013}, 
url = {http://videos.birs.ca/2013/13w5091/201310011538-Norin.mp4},
}

@article {Kleitman70,
    AUTHOR = {Kleitman, Daniel J.},
     TITLE = {The crossing number of {$K_{5,n}$}},
   JOURNAL = {J. Combinatorial Theory},
  FJOURNAL = {Journal of Combinatorial Theory},
    VOLUME = {9},
      YEAR = {1970},
     PAGES = {315--323},
      ISSN = {0021-9800},
   MRCLASS = {05.40},
  MRNUMBER = {280403},
  DOI = {10.1016/S0021-9800(70)80087-4},
}

@article{Beineke2010,
  doi = {10.1007/s00283-009-9120-4},
  URLX = {https://doi.org/10.1007/s00283-009-9120-4},
  year = {2010},
  nomonth = jan,
  publisher = {Springer Science and Business Media {LLC}},
  volume = {32},
  number = {2},
  pages = {41--48},
  author = {Lowell Beineke and Robin Wilson},
  title = {The Early History of the {B}rick {F}actory {P}roblem},
  journal = {Math. Intelligencer}
}

@article {Zarankiewicz54,
    AUTHOR = {Zarankiewicz, K.},
     TITLE = {On a problem of {P}. {T}uran concerning graphs},
   JOURNAL = {Fund. Math.},
  FJOURNAL = {Polska Akademia Nauk. Fundamenta Mathematicae},
    VOLUME = {41},
      YEAR = {1954},
     PAGES = {137--145},
      ISSN = {0016-2736},
   MRCLASS = {56.0X},
  MRNUMBER = {63641},
MRREVIEWER = {G. A. Dirac},
       DOI = {10.4064/fm-41-1-137-145},
       URLX = {https://doi.org/10.4064/fm-41-1-137-145},
}

@article {deKlerk06,
    AUTHOR = {de Klerk, E. and Maharry, J. and Pasechnik, D. V. and Richter,
              R. B. and Salazar, G.},
     TITLE = {Improved bounds for the crossing numbers of {$K_{m,n}$} and
              {$K_n$}},
   JOURNAL = {SIAM J. Discrete Math.},
  FJOURNAL = {SIAM Journal on Discrete Mathematics},
    VOLUME = {20},
      YEAR = {2006},
    NUMBER = {1},
     PAGES = {189--202},
      ISSN = {0895-4801},
   MRCLASS = {05C10 (68R10 90C22 90C25)},
  MRNUMBER = {2257255},
MRREVIEWER = {G\'{e}za T\'{o}th},
       DOI = {10.1137/S0895480104442741},
       URLX = {https://doi.org/10.1137/S0895480104442741},
}

@article {deKlerk07,
    AUTHOR = {de Klerk, Etienne and Pasechnik, Dmitrii V. and Schrijver,
              Alexander},
     TITLE = {Reduction of symmetric semidefinite programs using the regular
              {$\ast$}-representation},
   JOURNAL = {Math. Program.},
  FJOURNAL = {Mathematical Programming. A Publication of the Mathematical
              Programming Society},
    VOLUME = {109},
      YEAR = {2007},
    NUMBER = {2-3, Ser. B},
     PAGES = {613--624},
      ISSN = {0025-5610},
   MRCLASS = {90C22 (90C35)},
  MRNUMBER = {2296566},
MRREVIEWER = {Zheng Hai Huang},
       DOI = {10.1007/s10107-006-0039-7},
       URLX = {https://doi.org/10.1007/s10107-006-0039-7},
}

@manual{sagemath,
  Key          = {SageMath},
  Author       = {{The Sage Developers}},
  Title        = {{S}ageMath, the {S}age {M}athematics {S}oftware {S}ystem ({V}ersion 9.8)},
  URL         = { https://www.sagemath.org},
  Year         = {2023},
}

@article{Borchers1999,
  doi = {10.1080/10556789908805765},
  URLX = {https://doi.org/10.1080/10556789908805765},
  year = {1999},
  nomonth = jan,
  publisher = {Informa {UK} Limited},
  volume = {11},
  number = {1-4},
  pages = {613--623},
  author = {Brian Borchers},
  title = {{CSDP},  A {C} library for semidefinite programming},
  journal = {Optimization Methods and Software}
}

@misc{Aichholzer06,
  author = {Oswin Aichholzer},
  title = {Enumerating Order Types for Small Point Sets with Applications},
  year = 2006,
  url = {http://www.ist.tugraz.at/staff/aichholzer/research/rp/triangulations/ordertypes/},
  urldate = {2023-03-29},
}

@misc{Zwols2014,
  author = {Sergey Norine and Yori Zwols
},
  title = {Tur\'an's Brickyard Problem code},
  year = {2014},
  publisher = {GitHub},
  journal = {GitHub repository},
  url = {https://github.com/yozw/turan},
}

@manual{Mosek,
   author = "{MOSEK ApS}",
   title = "The MOSEK command line tools’s documentation. Version 10.0.",
   year = 2023,
   url = "https://docs.mosek.com/10.0/cmdtools/index.html"
 }

@article{Balogh2014,
  doi = {10.1017/s0963548314000820},
  URLX = {https://doi.org/10.1017/s0963548314000820},
  year = {2014},
  nomonth = dec,
  publisher = {Cambridge University Press ({CUP})},
  volume = {24},
  number = {4},
  pages = {658--679},
  author = {J{\'{o}}zsef Balogh and Ping Hu and Bernard Lidick{\'{y}} and Oleg Pikhurko and Bal{\'{a}}zs Udvari and Jan Volec},
  title = {Minimum Number of Monotone Subsequences of Length 4 in Permutations},
  journal = {Combin. Probab. Comput.}
}

@phdthesis{Baber2011,
  author = {Rahil Baber},
  title = {Some results in extremal combinatorics},
  year = {2011},
  school= {Department of Mathematics, University College London},
  url = {https://discovery.ucl.ac.uk/id/eprint/1306175/1/1306175.pdf},
}

@misc{Brosch2026,
  author = {Daniel Brosch and Oleg Pikhurko
},
  title = {Private communication, manuscript in preparation},
  year = {2026},
}

@misc{lpsdp2026,
  author = {Bernard Lidick\'y and Jan Volec
},
  title = {Solving Large {SDP}s from Flag Algebra Problems},
  year = {2026},
  note = {In preparation},
}

@article{Balogh2017RT,
  doi = {10.1016/j.jctb.2017.04.002},
  urlx = {https://doi.org/10.1016/j.jctb.2017.04.002},
  year = {2017},
  publisher = {Elsevier {BV}},
  volume = {126},
  pages = {83--113},
  author = {J{\'{o}}zsef Balogh and Ping Hu and Bernard Lidick{\'{y}} and Florian Pfender and Jan Volec and Michael Young},
  title = {Rainbow triangles in three-colored graphs},
  journal = {J. Combin. Theory Ser. B}
}

@article{Hladk2016,
  doi = {10.1007/s00493-015-2662-5},
  urlx = {https://doi.org/10.1007/s00493-015-2662-5},
  year = {2016},
  nomonth = jun,
  publisher = {Springer Science and Business Media {LLC}},
  volume = {37},
  number = {1},
  pages = {49--76},
  author = {Jan Hladk{\'{y}} and Daniel Kr{\'{a}}l' and Sergey Norin},
  title = {Counting flags in triangle-free digraphs},
  journal = {Combinatorica}
}

@article{Choi2020,
  doi = {10.1016/j.disc.2020.112015},
  urlx = {https://doi.org/10.1016/j.disc.2020.112015},
  year = {2020},
  nomonth = oct,
  publisher = {Elsevier {BV}},
  volume = {343},
  number = {10},
  pages = {112015},
  author = {Ilkyoo Choi and Bernard Lidick{\'{y}} and Florian Pfender},
  title = {Inducibility of directed paths},
  journal = {Discrete Mathematics}
}

@article{Balogh2022,
  doi = {10.1112/jlms.12568},
  urlx = {https://doi.org/10.1112/jlms.12568},
  year = {2022},
  nomonth = mar,
  publisher = {Wiley},
  volume = {106},
  number = {1},
  pages = {60--84},
  author = {J{\'{o}}zsef Balogh and Felix Christian Clemen and Bernard Lidick{\'{y}}},
  title = {Solving {T}ur{\'{a}}n{\textquotesingle}s tetrahedron problem for the $\ell_2$-norm},
  journal = {J. Lond. Math. Soc.}
}

@article{Brosch2022,
  title = {New lower bounds on crossing numbers of {$K_{m, n}$} from semidefinite programming},
  volume = {207},
  ISSN = {1436-4646},
  nourl = {http://dx.doi.org/10.1007/s10107-023-02028-1},
  DOI = {10.1007/s10107-023-02028-1},
  number = {1-2},
  journal = {Math. Program.},
  publisher = {Springer Science and Business Media LLC},
  author = {Brosch,  Daniel and C. Polak,  Sven},
  year = {2023},
  nomonth = Nov,
  pages = {693–715}
}

@article{Harary1963,
  title = {On the Number of Crossings in a Complete Graph},
  volume = {13},
  ISSN = {1464-3839},
  nourl = {http://dx.doi.org/10.1017/s0013091500025645},
  DOI = {10.1017/s0013091500025645},
  number = {4},
  journal = {Proc. Edinb. Math. Soc.},
  publisher = {Cambridge University Press (CUP)},
  author = {Harary,  Frank and Hill,  Anthony},
  year = {1963},
  nomonth = Dec,
  pages = {333–338}
}

@article{relizable,
author = {Ábrego, Bernardo and Aichholzer, Oswin and Fernández-Merchant, Silvia and Hackl, Thomas and Pammer, Jürgen and Pilz, Alexander and Ramos, Pedro and Salazar, Gelasio and Vogtenhuber, Birgit},
year = {2015},
pages = {57--60},
title = {All Good Drawings of Small Complete Graphs},
Journal = {In Proc. 31st European Workshop on Computational Geometry EuroCG ’15}
}

@article{Kiem2026,
  title = {The four-color {R}amsey multiplicity of triangles},
  volume = {179},
  ISSN = {0095-8956},
  nourl = {http://dx.doi.org/10.1016/j.jctb.2026.02.001},
  DOI = {10.1016/j.jctb.2026.02.001},
  journal = {J. Combin. Theory Ser. B},
  publisher = {Elsevier BV},
  author = {Kiem,  Aldo and Pokutta,  Sebastian and Spiegel,  Christoph},
  year = {2026},
  nomonth = July,
  pages = {19–70}
}

@misc{OAI:The-crossing-number-of-complete-bipartite-graphs-September-23-2026,
  author = {{OpenAI}},
  title = {{The crossing number of complete bipartite graphs}},
  howpublished = {OpenAI Math Release preprint
                  \href{https://github.com/openai/math/blob/main/preprints/The-crossing-number-of-complete-bipartite-graphs-September-23-2026/paper.pdf}{OAI:The-crossing-number-of-complete-bipartite-graphs-September-23-2026}},
  year = {2026}
}

@misc{OAI:The-crossing-number-of-complete-graphs-September-23-2026,
  author = {{OpenAI}},
  title = {{The crossing number of complete graphs}},
  howpublished = {OpenAI Math Release preprint
                  \href{https://github.com/openai/math/blob/main/preprints/The-crossing-number-of-complete-graphs-September-23-2026/paper.pdf}{OAI:The-crossing-number-of-complete-graphs-September-23-2026}},
  year = {2026}
}

@article{Balogh2023conf,
  title = {Crossing numbers of complete bipartite graphs},
  volume = {223},
  ISSN = {1877-0509},
  nourl = {http://dx.doi.org/10.1016/j.procs.2023.08.216},
  DOI = {10.1016/j.procs.2023.08.216},
  journal = {Procedia Computer Science},
  publisher = {Elsevier BV},
  author = {Balogh,  József and Lidický,  Bernard and Norin,  Sergey and Pfender,  Florian and Salazar,  Gelasio and Spiro,  Sam},
  year = {2023},
  pages = {78–87}
}

\end{document}